\documentclass[11pt]{article}
\usepackage[utf8]{inputenc}
\usepackage[a4paper,top=2.5cm,bottom=1.5cm,left=1.5cm,right=2cm,marginparwidth=1.5cm]{geometry}
\usepackage{amsmath}
\usepackage{amssymb}
\numberwithin{equation}{section}
\usepackage[english]{babel}
\usepackage{amsthm}
\usepackage{bbm}
\usepackage{amsfonts}
\usepackage{comment}
\usepackage{mathrsfs}
\usepackage{hyperref}
\usepackage{titlesec}
\usepackage[numbers,sort]{natbib}
\hypersetup{
    colorlinks=true,
    linkcolor=blue,
    citecolor=red,
    filecolor=magenta,      
    urlcolor=black,
    pdftitle={5Goup},
    pdfpagemode=FullScreen
    }
\usepackage{graphicx, tikz}
\usepackage{subcaption}
\usepackage{float}
\usepackage{xcolor}
\usepackage{bbm}
\usepackage[format=plain,
            font=it]{caption}

\newtheorem{theorem}{Theorem}[section]

\newtheorem{lemma}[theorem]{Lemma}

\newtheorem*{remark}{Remark}

\newcommand{\upd}{\mathrm{d}}  

\def\a{\alpha }              
     \def\d{\delta}          
    \def\ve{\varepsilon}    \def\z{\zeta }

     \def\t{\tau }

       \def\w{\omega }         \def\W{\Omega }

\title{Reduced-order modeling for electromagnetic inverse problems: a layered medium benchmark}
\author{Konstantinos Alexopoulos\thanks{CMAP, Ecole Polytechnique, Institut Polytechnique de Paris, 91120 Palaiseau, France} \and Josselin Garnier \thanks{CMAP, CNRS, Ecole Polytechnique, Institut Polytechnique de Paris, 91120 Palaiseau, France}}
\date{}

\begin{document}

\maketitle

\begin{abstract}
    We study reduced-order modeling for inverse problems in layered media, focusing on the recovery of impedance profiles from time-domain measurements. Using the Goupillaud structure, we formulate the forward problem as a discrete dynamical system and introduce a ROM-based objective defined at the level of reduced operators. Through numerical experiments on a 5-layer medium under various structured perturbations, we compare the ROM-based objective with a classical data misfit. The results show that the ROM-based inversion provides a more favorable reconstruction in the clean setting and in certain structured perturbation regimes, while remaining consistently competitive with the classical approach across all cases considered.
\end{abstract}


\section{Introduction}

Wave propagation in heterogeneous media plays a central role in many areas of science and engineering, including radar and sonar imaging \cite{cheney2009fundamentals, curlander1991synthetic, blondel1997handbook, gilman2017transionospheric}, seismology and geophysical exploration \cite{symes2008migration, virieux2010overview} and medical imaging via ultrasound \cite{szabo2013diagnostic}. In these applications, one seeks to recover information about an inaccessible medium from measurements of waves generated by controlled sources and recorded by an array of sensors. This leads to inverse problems for wave equations, where the goal is to estimate spatially varying material parameters such as wave speed, density, or electromagnetic coefficients.

Mathematically, wave propagation is typically modeled by second-order wave equations or, more generally, by first-order hyperbolic systems \cite{fouque2007wave, monk2003finite}. The unknown medium properties enter as coefficients in these equations and the measurements correspond to partial observations of the resulting wave field. The associated inverse problem is highly nonlinear and ill-posed, as the mapping from the coefficients to the observed data is complicated and sensitive to perturbations. Fundamental questions of uniqueness and stability have been extensively studied in the literature \cite{belishev2007recent, yu2001global, stefanov2005stable}, but these results often rely on idealized data that are not available in practical applications.

In practice, waveform inversion is commonly formulated as a nonlinear least-squares optimization problem, known as full waveform inversion (FWI), where one minimizes the discrepancy between measured and simulated data. Despite its widespread use, FWI suffers from severe limitations due to the non-convexity of the objective function. In particular, for band-limited and high-frequency data, the objective function exhibits numerous local minima, a phenomenon known as \emph{cycle skipping}, which causes gradient-based optimization methods to fail even when initialized close to the true solution.

Several approaches have been proposed to mitigate these difficulties. One direction consists of replacing the classical $L^2$ data misfit by alternative metrics, such as the Wasserstein distance from optimal transport theory \cite{engquist2013application, yang2018application}. This approach can improve the convexity of the objective function in simple settings \cite{engquist2022optimal}, although this property does not hold in general heterogeneous media \cite{borcea2024data}. Another approach is the so-called modeling operator extension or extended FWI \cite{huang2018source, van2013mitigating, warner2016adaptive}, which introduces additional degrees of freedom in the inversion procedure. While effective in certain configurations, its theoretical guarantees are limited to simplified acquisition geometries \cite{symes2022error}.

A different and promising strategy is based on reduced-order modeling (ROM) \cite{druskin2016direct, tether1970construction, druskin2018nonlinear, borcea2018untangling, borcea2020reduced, borcea2022reduced, borcea2023waveform, borcea20232waveform, borcea2024data}. In this framework, one constructs a low-dimensional dynamical system that captures the essential features of wave propagation in the unknown medium. These ROMs are data-driven: they are computed directly from the measurements without requiring knowledge of the wave field inside the medium. The construction relies on Galerkin projection onto spaces spanned by wave snapshots \cite{benner2015survey, brunton2022data, hesthaven2022reduced}, leading to reduced operators that encode the propagation dynamics. Inversion methods based on ROM have been shown to yield objective functions with improved optimization properties compared to classical FWI. Despite these advances, several challenges remain. In particular, understanding the behavior of ROM-based inversion in structured media and under realistic perturbations is still an open problem. Moreover, while the general ROM framework applies to a wide class of wave equations, its concrete implementation and analysis often benefit from simplified settings that retain the essential features of wave propagation, such as multiple scattering and reflection phenomena.

In this work, we investigate reduced-order modeling for inverse problems in layered media, with a focus on Goupillaud-type structures. This setting provides a natural discrete representation of wave propagation, where scattering events occur at discrete times and the wave field evolves according to a dynamical system. This structure allows for an explicit connection between wave propagation, data acquisition and reduced-order modeling. Our main contributions are as follows. The reduced-order modeling framework that we use follows closely the construction introduced in \cite{BORCEA2024113272, borcea2021reduced}. Our contribution is to adapt and analyze this framework in the specific setting of layered media, where the wave dynamics admit a discrete representation. First, we formulate a ROM-based objective for layered media that compares normalized reduced operators and removes global scaling effects. Then, we establish a precise connection between the Goupillaud structure and a data-driven discrete dynamical system, allowing for an explicit ROM construction. Finally, we provide a systematic numerical study under structured perturbations, identifying regimes in which the ROM-based inversion improves stability and reconstruction accuracy while remaining competitive in all cases.

The paper is organized as follows. In Section \ref{sec:Problem Formulation}, we introduce the electromagnetic forward problem and derive its first-order hyperbolic formulation. In Section \ref{sec:Scalar}, we show how this system can be reduced to a scalar wave equation in a layered medium. Section \ref{sec:Goup} is devoted to the analysis of one-dimensional layered media and the connection with the ROM framework. In Section \ref{sec:StabROM}, we formulate the inverse problem and introduce the ROM-based objective function. Section \ref{sec:NumExp} presents numerical experiments comparing the ROM-based inversion with classical approaches under various perturbations. Finally, Section \ref{sec:Conclusion} concludes the paper with a discussion of the results and perspectives for future work.

\section{Problem formulation}\label{sec:Problem Formulation}

We consider the inverse problem of recovering properties of an electromagnetic medium from measurements recorded by an array of antennas. The propagation of electromagnetic waves is governed by Maxwell's equations. In this section we formulate the forward problem, introduce the frequency-domain representation, and describe the reduced-order modeling framework that will be used for inversion.

\subsection{Maxwell's equations}

Let $\Omega \subset \mathbb{R}^3$ be a compact domain with Lipschitz boundary $\partial\Omega$. For $x\in\W$ and $t\in\mathbb{R}$, the electromagnetic fields consist of the electric field $\mathbf{E}(t,x)$ and the magnetic flux density $\mathbf{B}(t,x)$. We assume that the medium is described by a dielectric permittivity tensor $\boldsymbol{\ve}(x)$ and a scalar magnetic permeability $\mu(x)$. The tensor $\boldsymbol{\ve}(x)$ is assumed to be symmetric and positive definite, while $\mu(x)$ is assumed to be strictly positive. Both coefficients are time independent.

The fields satisfy Maxwell's equations
\begin{align}\label{eq:Maxwell}
    \left\{
    \begin{aligned}
        \nabla \times \mathbf{E}(t,x) + \frac{\partial \mathbf{B}}{\partial t}(t,x) &= -\mathbf{J}_m(t,x), \\
        -\nabla \times \bigl(\mu(x)^{-1}\mathbf{B}(t,x)\bigr) + \boldsymbol{\ve}(x)\frac{\partial \mathbf{E}}{\partial t}(t,x) &= -\mathbf{J}_e(t,x),
    \end{aligned}
    \right.
\end{align}
where $\mathbf{J}_e$ and $\mathbf{J}_m$ denote the electric and magnetic current sources, respectively.

We consider vanishing initial conditions prior to the excitation,
\begin{align}
    \mathbf{E}(t,x) = 0, \qquad \mathbf{B}(t,x)=0, \qquad t \ll 0, \ x\in\mathbb{R}^3.
\end{align}
On the boundary of the truncated domain, we use a standard conservative Maxwell realization. For definiteness, we impose perfectly conducting boundary conditions,
\begin{align}\label{boundary conditions physical}
    \mathbf n(x)\times \mathbf E(t,x)=0,
    \qquad x\in\partial\Omega.
\end{align}

\begin{lemma}\label{lem:hyp prob}
    Introducing the state vector
    \begin{align} \label{def: U}
        \mathbf{U}(t,x) :=
        \begin{pmatrix}
            \mathbf{E}(t,x) \\
            \mathbf{B}(t,x)
        \end{pmatrix},
    \end{align}
    Maxwell's equations can be written as a first-order hyperbolic system of the form 
    \begin{align}\label{eq: Max hyp: 1}
        \left(\mathbf{A}+\frac{\partial}{\partial t}\right)\mathbf{U}(t,x)= \mathbf{S}(t,x),
    \end{align}
    where
    \begin{align}\label{def:A}
        \mathbf{A} =
        \begin{pmatrix}
            0 & -\boldsymbol{\ve}(x)^{-1}\nabla\times \mu(x)^{-1} \\
            \nabla\times & 0
        \end{pmatrix}
    \end{align}
    and
    \begin{align}
        \mathbf{S}(t,x) =  - 
        \begin{pmatrix}
            \boldsymbol{\ve}(x)^{-1}  \mathbf{J}_e (t,x)\\
            \mathbf{J}_m(t,x)
        \end{pmatrix}.
    \end{align}
    Here the notation $\nabla\times\mu^{-1}$ means that the multiplication by $\mu(x)^{-1}$ is applied before taking the curl.
\end{lemma}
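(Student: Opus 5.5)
The plan is a direct algebraic rewriting of the system \eqref{eq:Maxwell}, one equation at a time, followed by stacking the results into the block form \eqref{eq: Max hyp: 1}.

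\textbf{Second equation.} I will first solve it for the time derivative of $\mathbf{E}$. Since $\boldsymbol{\ve}(x)$ is symmetric positive definite, it is invertible pointwise, so multiplying on the left by $\boldsymbol{\ve}(x)^{-1}$ is legitimate. This gives
\begin{align*}
    \frac{\partial \mathbf{E}}{\partial t} - \boldsymbol{\ve}^{-1}\nabla\times\bigl(\mu^{-1}\mathbf{B}\bigr) = -\boldsymbol{\ve}^{-1}\mathbf{J}_e .
\end{align*}
Because $\boldsymbol{\ve}$ is time independent, it commutes with $\partial_t$. The spatial operator acting on $\mathbf{B}$ is exactly the $(1,2)$ entry of $\mathbf{A}$ in \eqref{def:A}. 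The ordering convention stated after the lemma matters here: one multiplies by $\mu^{-1}$, then takes the curl, then multiplies by $\boldsymbol{\ve}^{-1}$. Since $\mu(x)>0$, the factor $\mu^{-1}$ is well defined.

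\textbf{First equation.} This one is already in the required form:
\begin{align*}
    \frac{\partial \mathbf{B}}{\partial t} + \nabla\times\mathbf{E} = -\mathbf{J}_m .
\end{align*}
It yields the $(2,1)$ entry $\nabla\times$ of $\mathbf{A}$. The diagonal blocks of $\mathbf{A}$ are zero, because neither equation contains an undifferentiated zeroth-order term.

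\textbf{Assembly.} Stacking the two rewritten equations in the order $(\mathbf{E},\mathbf{B})$, consistent with \eqref{def: U}, gives \eqref{eq: Max hyp: 1} with the stated $\mathbf{A}$ and $\mathbf{S}$. The source vector is simply the pair of right-hand sides, including the factor $\boldsymbol{\ve}^{-1}$ on $\mathbf{J}_e$.

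\textbf{Hyperbolicity.} For the qualifier ``hyperbolic,'' I would add a brief remark. Let $\mathbf{W}=\mathrm{diag}(\boldsymbol{\ve},\mu^{-1})$. Then $\mathbf{A}$ is skew-adjoint in the weighted $L^2$ inner product $\langle \mathbf{U},\mathbf{V}\rangle=\int_\Omega \mathbf{W}\mathbf{U}\cdot\mathbf{V}$ under the boundary condition \eqref{boundary conditions physical}. This follows from the integration-by-parts identity
\begin{align*}
    \int_\Omega (\nabla\times \mathbf{F})\cdot \mathbf{G} - \mathbf{F}\cdot(\nabla\times\mathbf{G}) = \int_{\partial\Omega} (\mathbf{n}\times\mathbf{F})\cdot\mathbf{G},
\end{align*}
where the boundary term vanishes because $\mathbf{n}\times\mathbf{E}=0$ on $\partial\Omega$. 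This is the conservative structure the paper refers to, and skew-adjointness is what makes the system a symmetric hyperbolic system.

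\textbf{Main difficulty.} The only point needing care is keeping track of operator ordering and the left multiplication by the matrix $\boldsymbol{\ve}^{-1}$, since $\boldsymbol{\ve}$ is a tensor rather than a scalar. Everything else is bookkeeping.
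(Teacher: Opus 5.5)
Your proof is correct and follows the paper's argument. Both solve the second Maxwell equation for $\partial_t\mathbf E$ by left-multiplying with $\boldsymbol{\ve}^{-1}$, keep the first equation as it is, and stack the two in the order $(\mathbf E,\mathbf B)$; you get the signs and the operator ordering $\boldsymbol{\ve}^{-1}\nabla\times(\mu^{-1}\,\cdot)$ right. Your added hyperbolicity remark (skew-symmetry in the energy inner product weighted by $\mathrm{diag}(\boldsymbol{\ve},\mu^{-1})$) is not in the paper's proof of this lemma, but it is correct and amounts to Lemma~\ref{lem: tilde A skew sym}, since the normalization $\tilde{\mathbf E}=\boldsymbol{\ve}^{1/2}\mathbf E$, $\tilde{\mathbf B}=\mu^{-1/2}\mathbf B$ turns that weighted inner product into the plain $L^2$ one.
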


\begin{proof}
    See Appendix \ref{app:sec1:hyp prob}.
\end{proof}

\begin{lemma}\label{lemma:hyp prob:2}
    Using the normalized fields
    \begin{align}\label{def: E tilde, B tilde}
        \tilde{\mathbf{E}}(t,x) = \boldsymbol{\ve}(x)^{\frac{1}{2}} \mathbf{E}(t,x),
        \qquad
        \tilde{\mathbf{B}}(t,x) = \mu(x)^{-\frac{1}{2}} \mathbf{B}(t,x),
    \end{align}
    where $\boldsymbol{\ve}(x)^{1/2}$ denotes the symmetric positive definite square root of $\boldsymbol{\ve}(x)$, the system \eqref{eq: Max hyp: 1} becomes
    \begin{align}\label{eq:tildeU}
        \left(\tilde{\mathbf{A}} + \frac{\partial}{\partial t}\right) \tilde{\mathbf{U}}(t,x)
        =
        \tilde{\mathbf{S}}(t,x),
        \qquad t\in\mathbb{R}, \quad x\in\W,
    \end{align}
    with
    \begin{align}
        \tilde{\mathbf{U}}(t,x)
        =
        \begin{pmatrix}
            \tilde{\mathbf{E}}(t,x)\\
            \tilde{\mathbf{B}}(t,x)
        \end{pmatrix}.
    \end{align}
    The differential operator $\tilde{\mathbf{A}}$ is
    \begin{align}\label{def: tilde A}
        \tilde{\mathbf{A}} = 
        \begin{pmatrix}
            0 & -\boldsymbol{\ve}(x)^{-\frac12}\nabla\times \mu(x)^{-\frac12}\\
            \mu(x)^{-\frac12}\nabla\times \boldsymbol{\ve}(x)^{-\frac12} & 0
        \end{pmatrix},
    \end{align}
    and
    \begin{align}
        \tilde{\mathbf{S}}(t,x)
        =
        -
        \begin{pmatrix}
            \boldsymbol{\ve}(x)^{-\frac12}\mathbf{J}_e(t,x)\\
            \mu(x)^{-\frac12}\mathbf{J}_m(t,x)
        \end{pmatrix}.
    \end{align}
    In \eqref{def: tilde A}, the coefficient $\mu(x)^{-1/2}$ is part of the argument of the curl. Thus
    \begin{align*}
        \nabla\times \mu(x)^{-1/2}\mathbf v
        =
        \nabla\times\bigl(\mu(x)^{-1/2}\mathbf v\bigr).
    \end{align*}
\end{lemma}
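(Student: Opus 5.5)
Starting from the first-order system of Lemma \ref{lem:hyp prob}, written as $\partial_t\mathbf{E} - \boldsymbol{\ve}^{-1}\nabla\times(\mu^{-1}\mathbf{B}) = -\boldsymbol{\ve}^{-1}\mathbf{J}_e$ and $\partial_t\mathbf{B} + \nabla\times\mathbf{E} = -\mathbf{J}_m$, I will substitute $\mathbf{E} = \boldsymbol{\ve}^{-1/2}\tilde{\mathbf{E}}$ and $\mathbf{B} = \mu^{1/2}\tilde{\mathbf{B}}$ and then multiply each equation on the left by a time-independent matrix chosen to restore a unit coefficient in front of the time derivative. Both substitutions are legitimate pointwise: $\boldsymbol{\ve}(x)$ is symmetric positive definite, so $\boldsymbol{\ve}(x)^{1/2}$ exists, is symmetric positive definite and invertible, with inverse $\boldsymbol{\ve}(x)^{-1/2}$. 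Likewise $\mu(x)>0$, so $\mu(x)^{\pm 1/2}$ are well defined.

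\emph{The electric equation.} Since the coefficients do not depend on $t$, we have $\partial_t\mathbf{E} = \boldsymbol{\ve}^{-1/2}\partial_t\tilde{\mathbf{E}}$. We also have $\mu^{-1}\mathbf{B} = \mu^{-1}\mu^{1/2}\tilde{\mathbf{B}} = \mu^{-1/2}\tilde{\mathbf{B}}$. The equation therefore becomes
\begin{align*}
\boldsymbol{\ve}^{-1/2}\partial_t\tilde{\mathbf{E}} - \boldsymbol{\ve}^{-1}\nabla\times(\mu^{-1/2}\tilde{\mathbf{B}}) = -\boldsymbol{\ve}^{-1}\mathbf{J}_e .
\end{align*}
Multiplying on the left by $\boldsymbol{\ve}^{1/2}$ and using $\boldsymbol{\ve}^{1/2}\boldsymbol{\ve}^{-1} = \boldsymbol{\ve}^{-1/2}$ (valid because all these matrices are functions of the same symmetric matrix and therefore commute) gives
\begin{align*}
\partial_t\tilde{\mathbf{E}} - \boldsymbol{\ve}^{-1/2}\nabla\times(\mu^{-1/2}\tilde{\mathbf{B}}) = -\boldsymbol{\ve}^{-1/2}\mathbf{J}_e .
\end{align*}
This is exactly the first row of \eqref{eq:tildeU}.

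\emph{The magnetic equation.} Here $\partial_t\mathbf{B} = \mu^{1/2}\partial_t\tilde{\mathbf{B}}$ and $\nabla\times\mathbf{E} = \nabla\times(\boldsymbol{\ve}^{-1/2}\tilde{\mathbf{E}})$. Multiplying the equation by the scalar $\mu^{-1/2}$ yields
\begin{align*}
\partial_t\tilde{\mathbf{B}} + \mu^{-1/2}\nabla\times(\boldsymbol{\ve}^{-1/2}\tilde{\mathbf{E}}) = -\mu^{-1/2}\mathbf{J}_m ,
\end{align*}
which is the second row of \eqref{eq:tildeU}, with operator $\tilde{\mathbf{A}}$ as in \eqref{def: tilde A} and source $\tilde{\mathbf{S}}$ as stated.

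\emph{The main point of care.} The only step needing attention is the ordering of coefficients relative to the curl. A coefficient can be moved freely only when it stands to the left of the curl, that is, when it multiplies the whole equation. A coefficient inside the curl must stay as part of its argument, since $\nabla\times(f\mathbf{v}) \neq f\nabla\times\mathbf{v}$ when $f$ varies in space. Every manipulation above multiplies entire equations on the left by $x$-dependent matrices, so no such commutation is used. The convention stated in the lemma, that $\mu^{-1/2}$ belongs to the argument of the curl, matches exactly the expression obtained. I would also note that the resulting $\tilde{\mathbf{A}}$ is formally skew-adjoint in $L^2$. This follows from the adjointness of the curl under the boundary condition \eqref{boundary conditions physical}, but it is not needed for the equivalence itself.
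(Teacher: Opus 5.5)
Your proof is correct and follows essentially the same route as the paper. You substitute $\mathbf E=\boldsymbol{\ve}^{-1/2}\tilde{\mathbf E}$ and $\mathbf B=\mu^{1/2}\tilde{\mathbf B}$ into the system of Lemma~\ref{lem:hyp prob}, multiply the two equations on the left by $\boldsymbol{\ve}^{1/2}$ and $\mu^{-1/2}$ respectively, and keep the coefficients inside the curl, which the paper also does.
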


\begin{proof}
    See Appendix \ref{app:sec1:hyp prob:2}.
\end{proof}

\begin{lemma}\label{lem: tilde A skew sym}
    The operator matrix $\tilde{\mathbf A}$ is skew-symmetric on the above domain. With the standard perfectly conducting Maxwell realization, it is skew-adjoint.
\end{lemma}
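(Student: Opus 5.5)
The plan is to work in the Hilbert space $\mathcal H = L^2(\W)^3 \times L^2(\W)^3$ with the standard (unweighted) inner product
\begin{align*}
\langle \mathbf U,\mathbf V\rangle = \int_\W \bigl(\mathbf U_1\cdot\mathbf V_1 + \mathbf U_2\cdot\mathbf V_2\bigr)\,\upd x .
\end{align*}
The normalization of Lemma \ref{lemma:hyp prob:2} is designed precisely so that this flat inner product is the energy inner product. Skew-symmetry then reduces to integration by parts for the curl, combined with the symmetry of the multiplication operators $\boldsymbol{\ve}^{-1/2}$ and $\mu^{-1/2}$.

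First, I will define the domain
\begin{align*}
D(\tilde{\mathbf A}) = \{(\mathbf u,\mathbf v)\in\mathcal H:\ \boldsymbol{\ve}^{-1/2}\mathbf u\in H_0(\mathrm{curl},\W),\ \mu^{-1/2}\mathbf v\in H(\mathrm{curl},\W)\}.
\end{align*}
The condition on the first component encodes \eqref{boundary conditions physical}, since $\boldsymbol{\ve}^{-1/2}\tilde{\mathbf E}=\mathbf E$. Here the coefficients are assumed bounded and uniformly positive, so that multiplication by them is an isomorphism of $L^2$.

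For $\mathbf U=(\mathbf u,\mathbf v)$ and $\mathbf V=(\mathbf u',\mathbf v')$ in this domain, I will compute
\begin{align*}
\langle\tilde{\mathbf A}\mathbf U,\mathbf V\rangle = -\int_\W (\nabla\times \mu^{-1/2}\mathbf v)\cdot \boldsymbol{\ve}^{-1/2}\mathbf u'\,\upd x + \int_\W (\nabla\times\boldsymbol{\ve}^{-1/2}\mathbf u)\cdot\mu^{-1/2}\mathbf v'\,\upd x .
\end{align*}
This step moves the symmetric coefficients across the dot product. I will then apply the Green formula
\begin{align*}
\int_\W (\nabla\times\mathbf a)\cdot\mathbf b\,\upd x - \int_\W \mathbf a\cdot(\nabla\times\mathbf b)\,\upd x = \int_{\partial\W}(\mathbf n\times\mathbf a)\cdot\mathbf b\,\upd s ,
\end{align*}
which is valid in the $H(\mathrm{curl})$ duality pairing on Lipschitz domains. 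In each of the two terms one of the factors has vanishing tangential trace, because $\boldsymbol{\ve}^{-1/2}\mathbf u$ and $\boldsymbol{\ve}^{-1/2}\mathbf u'$ both lie in $H_0(\mathrm{curl})$, so the boundary terms vanish. Regrouping yields $\langle\tilde{\mathbf A}\mathbf U,\mathbf V\rangle=-\langle\mathbf U,\tilde{\mathbf A}\mathbf V\rangle$, which is the claimed skew-symmetry.

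For skew-adjointness I must show $D(\tilde{\mathbf A}^*)\subset D(\tilde{\mathbf A})$ and $\tilde{\mathbf A}^*=-\tilde{\mathbf A}$. I will write $\tilde{\mathbf A}=M\mathbf A_0 M$, where $M=\mathrm{diag}(\boldsymbol{\ve}^{-1/2},\mu^{-1/2})$ is bounded, self-adjoint and boundedly invertible, and
\begin{align*}
\mathbf A_0=\begin{pmatrix}0&-\nabla\times\\ \nabla\times&0\end{pmatrix}, \qquad D(\mathbf A_0)=H_0(\mathrm{curl})\times H(\mathrm{curl}).
\end{align*}
The key fact is that $\mathbf A_0$ is skew-adjoint. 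This follows from the classical identity that the adjoint of $\nabla\times$ on $H_0(\mathrm{curl})$ is $\nabla\times$ on $H(\mathrm{curl})$ and vice versa, which comes from the definition of distributional curl together with the density of $C_c^\infty$ in $H_0(\mathrm{curl})$. With $M$ bounded and invertible, $(M\mathbf A_0M)^*=M\mathbf A_0^*M=-M\mathbf A_0M$, and the domains match because $M$ is an $L^2$ isomorphism.

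I expect the main obstacle to be this adjoint identity, particularly the characterization of $H_0(\mathrm{curl})$ via vanishing tangential trace on Lipschitz boundaries. I would cite it from the standard Maxwell literature, for instance Monk's text, rather than reprove it. As an alternative, I could verify $\mathrm{Ran}(\tilde{\mathbf A}\pm I)=\mathcal H$ directly, but the factorization argument is cleaner.
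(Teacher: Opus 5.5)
Your proposal is correct. The skew-symmetry half is the paper's own computation. You move the symmetric factors $\boldsymbol{\ve}^{-1/2}$ and $\mu^{-1/2}$ across the dot product, apply the curl Green formula, and kill both boundary terms with the tangential condition on $\boldsymbol{\ve}^{-1/2}\mathbf u$ and $\boldsymbol{\ve}^{-1/2}\mathbf u'$.

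The genuine difference is in the second claim. The paper proves only skew-symmetry and then asserts skew-adjointness as a standard result, citing Monk's book. You instead write $\tilde{\mathbf A}=M\mathbf A_0M$ and use two adjoint rules: $(BT)^*=T^*B^*$ for bounded $B$, and $(TB)^*=B^*T^*$ when $B$ also has a bounded inverse. This reduces the claim to the skew-adjointness of the unweighted perfectly conducting operator $\mathbf A_0$. The only imported fact is then that the adjoint of $\nabla\times$ on $H_0(\mathrm{curl})$ is $\nabla\times$ on $H(\mathrm{curl})$; the converse follows by closedness.

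Your route gives a real argument for the inclusion $D(\tilde{\mathbf A}^*)\subset D(\tilde{\mathbf A})$, which skew-symmetry alone cannot deliver. It also makes explicit that $\boldsymbol{\ve}$ and $\mu$ must be bounded and uniformly positive, an assumption the paper uses only tacitly. The paper's version is shorter but rests entirely on the citation.

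One small refinement: the adjoint identity needs only $H_0(\mathrm{curl})$ defined as the $H(\mathrm{curl})$-closure of $C_c^\infty(\W)^3$. The tangential-trace characterization on Lipschitz domains is needed only to match your domain with the paper's, which is phrased through $\mathbf n\times(\boldsymbol{\ve}^{-1/2}\psi_1)=0$.
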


\begin{proof}
    See Appendix \ref{app:sec1:tilde A skew sym}.
\end{proof}

\subsection{Frequency-domain formulation}

\subsubsection{Excitation expression}

We assume that the data for the inversion are collected by an array of $m$ antennas. Let $p=1,2,\dots,6$ denote the polarizations. Then the excitation of the $s^{\text{th}}$ antenna is given by
\begin{align}\label{def: J_e s,p}
    \mathbf{J}_e^{(s,p)} = 
    \begin{cases}
        \boldsymbol{\ve}^{\frac{1}{2}}(x) f(t) F_e^{(s)}(x) e_p, \quad &s=1,\dots,m, \quad p=1,2,3, \\
        0, &s=1,\dots,m, \quad p=4,5,6,
    \end{cases}
\end{align}
and
\begin{align}\label{def: J_m s,p}
    \mathbf{J}_m^{(s,p)} = 
    \begin{cases}
        0, \quad &s=1,\dots,m, \quad p=1,2,3,\\
        \mu(x)^{\frac{1}{2}} f(t) F_m^{(s)}(x) e_{p-3}, \quad &s=1,\dots,m, \quad p=4,5,6.
    \end{cases}
\end{align}
This choice ensures that the normalized source has the same form in all polarizations. Here $e_1,e_2,e_3$ are the canonical basis vectors in $\mathbb{R}^3$.

Here, $f$ denotes the probing pulses emitted by the antennas. They are supported in a short time interval $(-T_f,T_f)$ and have Fourier transform $\hat{f}$ that is non-negligible at frequencies $\w$ satisfying $|\w\pm\w_0|<\pi b$. The pulse satisfies
\begin{align}
    \hat{f}(0) = \int_{-T_f}^{T_f} f(t) \upd t = 0.
\end{align}

\subsubsection{Component-wise problem formulation}

Using the polarization of each antenna, we write
\begin{align}\label{def: U s,p}
    \tilde{\mathbf{U}}^{(s,p)}(t,x) = 
    \begin{pmatrix}
        \tilde{\mathbf{E}}^{(s,p)}(t,x) \\
        \tilde{\mathbf{B}}^{(s,p)}(t,x)
    \end{pmatrix},
    \quad s=1,\dots,m, \quad p=1,2,\dots ,6.
\end{align}
Substituting \eqref{def: J_e s,p}, \eqref{def: J_m s,p} and \eqref{def: U s,p} into \eqref{eq:tildeU}, we get
\begin{align*}
    \left(\tilde{\mathbf{A}} + \frac{\partial}{\partial t}\right) \tilde{\mathbf{U}}^{(s,p)}(t,x) = - 
    \begin{pmatrix}
        \boldsymbol{\ve}(x)^{-\frac{1}{2}} \mathbf{J}_e^{(s,p)}(t,x) \\
        \mu(x)^{-\frac{1}{2}} \mathbf{J}_m^{(s,p)}(t,x)
    \end{pmatrix}.
\end{align*}
Let $\{\tilde{e}_p\}_{p=1}^6$ be the canonical basis in $\mathbb{R}^6$ and introduce the antenna excitation matrix
\begin{align}
    \mathbf{F}^{(s)}(x) :=
    \begin{pmatrix}
        F_e^{(s)}(x) \textbf{I}_3 & 0\\
        0 & F_m^{(s)}(x) \textbf{I}_3
    \end{pmatrix}
    \in \mathbb R^{6\times 6}.
\end{align}
Then the component-wise system becomes
\begin{align}\label{eq: Maxwell: 1 hyp: 3: component}
    \left(\tilde{\mathbf{A}}+\frac{\partial}{\partial t}\right)\tilde{\mathbf{U}}^{(s,p)}(t,x)
    =
    -f(t) \mathbf{F}^{(s)}(x)\widetilde e_p.
\end{align}
The associated initial condition is
\begin{align}\label{initial condition}
    \tilde{\mathbf{U}}^{(s,p)}(t,x) = \mathbf{0}, \quad t<-T_f, \quad x\in\W.
\end{align}
The boundary conditions are chosen so that the boundary contribution in the curl Green formula vanishes. For the perfectly conducting realization used here, this corresponds in the normalized variables to
\begin{align*} 
    \mathbf n\times(\boldsymbol{\ve}^{-1/2}\tilde{\mathbf E}^{(s,p)})=0.
\end{align*}

The natural domain of $\tilde{\mathbf A}$ is therefore
\begin{align}
\begin{aligned}
    \mathbf{\mathcal{D}} :=  \Bigg\{ \mathbf{\psi}(x) := \begin{pmatrix}
        \psi_1(x)\\ \psi_2(x)
    \end{pmatrix} : \ &
    \nabla \times \Big(\boldsymbol{\ve}(x)^{-\frac12}\psi_1(x)\Big)\in L^2(\W),
    \quad \nabla \times \Big(\mu(x)^{-\frac12}\psi_2(x)\Big)\in L^2(\W),
    \\
    &
    \mathbf{n} \times \Big(\boldsymbol{\ve}^{-\frac12}\psi_1\Big)\big|_{\partial\W}=0,
    \Bigg\}.
\end{aligned}
\end{align}

\subsubsection{Wave decomposition}

For the potential representation of the null-space component, we assume for simplicity that $\Omega$ is simply connected with connected boundary. 

\begin{theorem}[Helmholtz decomposition]\label{thm:Helmholtz dec}
    The solution to \eqref{eq: Maxwell: 1 hyp: 3: component} with the initial and boundary conditions above has the Helmholtz decomposition
    \begin{align}\label{Helmholtz decomposition}
        \tilde{\mathbf{U}}^{(s,p)}(t,x)
        =
        \widetilde{\nabla} \mathbf{N}^{(s,p)}(t,x)
        +
        \mathbb{U}^{(s,p)}(t,x),
    \end{align}
    where $\widetilde{\nabla}\mathbf{N}^{(s,p)}\in\text{null}(\tilde{\mathbf{A}})$ and $\mathbb{U}^{(s,p)}\in \overline{\mathrm{range}(\tilde{\mathbf A})} = \mathrm{null}(\tilde{\mathbf A})^\perp$, for $s=1,\dots,m$ and $p=1,2,\dots,6$. The operator $\widetilde{\nabla}$ is defined by
    \begin{align}\label{eq:nablatilde}
        \widetilde{\nabla} \psi (x) =
        \begin{pmatrix}
            \boldsymbol{\ve}(x)^{\frac12}\nabla \psi_1(x) \\
            \mu(x)^{\frac12}\nabla \psi_2(x)
        \end{pmatrix},
        \quad
        \psi =
        \begin{pmatrix}
            \psi_1\\
            \psi_2
        \end{pmatrix}.
    \end{align}
\end{theorem}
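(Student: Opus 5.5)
The decomposition comes from the orthogonal splitting $L^2(\W)^6 = \mathrm{null}(\tilde{\mathbf A}) \oplus \overline{\mathrm{range}(\tilde{\mathbf A})}$, which holds for any closed densely defined skew-adjoint operator. By Lemma \ref{lem: tilde A skew sym}, $\tilde{\mathbf A}^* = -\tilde{\mathbf A}$. The general identity $\overline{\mathrm{range}(T)}^\perp = \mathrm{null}(T^*)$ then gives $\overline{\mathrm{range}(\tilde{\mathbf A})}^\perp = \mathrm{null}(\tilde{\mathbf A})$, and this is the equality $\overline{\mathrm{range}(\tilde{\mathbf A})} = \mathrm{null}(\tilde{\mathbf A})^\perp$ asserted in the statement. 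For every $t$ we take the orthogonal projection $P$ onto $\mathrm{null}(\tilde{\mathbf A})$ and set $\mathbb U^{(s,p)}(t) := (I-P)\tilde{\mathbf U}^{(s,p)}(t)$. Since $P$ commutes with $\tilde{\mathbf A}$ and with $\partial_t$, applying $P$ to \eqref{eq: Maxwell: 1 hyp: 3: component} shows that the null component satisfies $\partial_t P\tilde{\mathbf U}^{(s,p)} = -f(t)P\mathbf F^{(s)}\tilde e_p$. It therefore equals $-\int_{-T_f}^t f(\tau)\,\upd\tau\; P\mathbf F^{(s)}\tilde e_p$. Because $\hat f(0)=0$, this vanishes for $t>T_f$, though that is not needed for the decomposition. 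The component $\mathbb U^{(s,p)}$ solves the projected equation on the invariant subspace $\overline{\mathrm{range}(\tilde{\mathbf A})}$.

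The remaining task is to show that $\mathrm{null}(\tilde{\mathbf A})$ consists exactly of fields of the form $\widetilde\nabla\psi$, and this is the main obstacle. Take $\phi=(\phi_1,\phi_2)$ with $\tilde{\mathbf A}\phi = 0$. Reading off the block structure \eqref{def: tilde A} gives $\nabla\times(\mu^{-1/2}\phi_2)=0$ and $\nabla\times(\boldsymbol\ve^{-1/2}\phi_1)=0$, together with the boundary condition $\mathbf n\times(\boldsymbol\ve^{-1/2}\phi_1)=0$. Since $\Omega$ is simply connected with connected boundary, the first Betti number of $\Omega$ vanishes, so a curl-free $L^2$ field is a gradient. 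Hence $\mu^{-1/2}\phi_2 = \nabla\psi_2$ with $\psi_2\in H^1(\Omega)$, and $\boldsymbol\ve^{-1/2}\phi_1 = \nabla\psi_1$. For the electric component the tangential boundary condition together with connectedness of $\partial\Omega$ lets us choose $\psi_1\in H^1_0(\Omega)$: the tangential gradient of $\psi_1$ vanishes on the connected boundary, so $\psi_1$ is constant there, and we subtract that constant. This yields $\phi_1 = \boldsymbol\ve^{1/2}\nabla\psi_1$ and $\phi_2 = \mu^{1/2}\nabla\psi_2$, that is, $\phi = \widetilde\nabla\psi$ as in \eqref{eq:nablatilde}. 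For the converse inclusion we check directly that $\tilde{\mathbf A}\widetilde\nabla\psi = 0$, using $\nabla\times\nabla = 0$ and observing that $\widetilde\nabla\psi$ lies in $\mathcal D$ when $\psi_1\in H^1_0$.

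I expect the delicate point to be the justification of the potential representation in $L^2$ with Lipschitz boundary. I would invoke the standard de Rham / Girault--Raviart result: on a simply connected Lipschitz domain, $\{u\in L^2:\nabla\times u=0\} = \nabla H^1(\Omega)$, and with vanishing tangential trace this becomes $\nabla H^1_0(\Omega)$ when $\partial\Omega$ is connected. The final step applies this pointwise in $t$ with $\mathbf N^{(s,p)}(t,\cdot)$ chosen as the potential of $P\tilde{\mathbf U}^{(s,p)}(t)$. Measurability and regularity in $t$ are inherited from the explicit formula for $P\tilde{\mathbf U}^{(s,p)}$ above: it is a fixed potential $\psi^{(s,p)}$ with $\widetilde\nabla\psi^{(s,p)} = -P\mathbf F^{(s)}\tilde e_p$, multiplied by $\int_{-T_f}^t f$.
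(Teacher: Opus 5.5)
Your proposal is correct and follows the same route as the paper's proof. Both use the orthogonal splitting $L^2(\Omega)^6=\mathrm{null}(\tilde{\mathbf A})\oplus\overline{\mathrm{range}(\tilde{\mathbf A})}$ obtained from skew-adjointness, together with the scalar-potential representation of the two curl-free components on a simply connected domain; you additionally justify points the paper leaves implicit, namely the choice of the electric potential in $H^1_0(\Omega)$ via the tangential boundary condition and connectedness of $\partial\Omega$, the converse inclusion $\widetilde{\nabla}\psi\in\mathrm{null}(\tilde{\mathbf A})$, and the explicit time dependence of the null-space component.
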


\begin{proof}
    See Appendix \ref{app:sec1:thm:Helmholtz dec}.
\end{proof}

\begin{remark}
    The propagating component $\mathbb{U}^{(s,p)}$ is orthogonal to the null space of $\tilde{\mathbf A}$ and satisfies the corresponding weak divergence constraints. This is the analogue of the standard Maxwell decomposition; see, for instance, Lemma 4.5 in \cite{monk2003finite}.
\end{remark}

Let $P_0$ denote the orthogonal projection onto
$\mathrm{null}(\tilde{\mathbf A})$, and let $P_r=I-P_0$. For
\begin{align*}
    \mathbf F^{(s,p)}(x):=\mathbf F^{(s)}(x)\tilde e_p,
\end{align*}
we write
\begin{align*}
    \mathbf F_0^{(s,p)}:=P_0\mathbf F^{(s,p)},
    \qquad
    \mathbb F^{(s,p)}:=P_r\mathbf F^{(s,p)}.
\end{align*}
The null-space component satisfies
\begin{align}
    \frac{\partial}{\partial t}
    \widetilde{\nabla}\mathbf N^{(s,p)}(t,x)
    =
    -f(t)\mathbf F_0^{(s,p)}(x).
\end{align}
The range component satisfies
\begin{align}\label{eq: U div free}
    \left(\tilde{\mathbf{A}} + \frac{\partial}{\partial t}\right) \mathbb{U}^{(s,p)}(t,x) = - 
        f(t) \mathbb{F}^{(s,p)}(x),
    \quad t\in\mathbb{R}, \ x\in\W,
\end{align}
where $\mathbb{F}^{(s,p)}$ is the range component of $\mathbf{F}^{(s,p)}(x)$.

\subsubsection{Array response matrix}

Let us assume that the antenna profiles $\mathbf{F}^{(s)}$ are spatially localized in a small neighborhood of the receiver position $x_s$. More precisely, each $\mathbf{F}^{(s)}$ is supported in a region whose diameter is small compared with the wavelength and satisfies the normalization condition
\begin{align*}
    \int_{\Omega} \mathbf{F}^{(s)}(x)\upd x = \mathbf{I}_6.
\end{align*}
In this sense, the functions $F^{(s)}$ act as approximate Dirac distributions centered at $x_s$. As a result, the measurements correspond to localized averages of the wave field, and we use the approximation
\begin{align*}
    \int_{\Omega} \mathbf{F}^{(s')}(x) \mathbf{U}(t,x)\upd x \approx \mathbf{U}(t,x_{s'}).
\end{align*}
We define the array response matrix to be the $6m\times6m$ time dependent matrix $\mathcal{W}$. In the $(s,p)$ column of this matrix, we store the recording of $\tilde{\mathbf{U}}^{(s,p)}$ at all the antennas. Then,
\begin{align}\label{def: array response}
    \mathcal{W}^{(s',p'),(s,p)}(t)
    =
    \tilde{e}^{\top}_{p'}
    \int_{\W}
    \mathbf{F}^{(s')}(x)
    \tilde{\mathbf{U}}^{(s,p)}(t,x)
    \upd x
    \approx
    \tilde{e}^{\top}_{p'}
    \tilde{\mathbf{U}}^{(s,p)}(t,x_{s'}).
\end{align}

Let us emphasize that, although the general formulation allows for anisotropic media and variable permeability, we assume that the medium is known, isotropic and homogeneous in the vicinity of the array. In this region, $\boldsymbol{\ve}=\ve_0\mathbf I$ and $\mu=\mu_0$, so the reference scalar wave speed is
\begin{align*}
    c_0=\frac{1}{\sqrt{\mu_0\ve_0}}.
\end{align*}
By vicinity, we mean within a distance $2c_0T_f$ traveled by the waves over the duration of the probing pulse $f$.

Thus, since the medium is known and homogeneous within a distance $2c_0T_f$ from the array, we can compute $\tilde{\mathbf{U}}^{(s,p)}$ for $t\in(-T_f,T_f)$ and remove the component lying in the null space. Then, we obtain the transformed array response matrix $\mathbb{W}(t)\in\mathbb{R}^{6m\times6m}$, with entries
\begin{align}\label{def: W}
    \mathbb{W}^{(s',p'),(s,p)}(t)
    =
    \tilde{e}^{\top}_{p'}
    \int_{\W}
    \mathbf{F}^{(s')}(x)
    \mathbb{U}^{(s,p)}(t,x)
    \upd x
    \approx
    \tilde{e}^{\top}_{p'}
    \mathbb{U}^{(s,p)}(t,x_{s'}).
\end{align}

\subsubsection{Spectral decomposition}

Since we have removed the null space components of $\tilde{\mathbf{A}}$, let
\begin{align}
    \mathcal H_r
    :=
    \mathrm{null}(\tilde{\mathbf A})^\perp
    =
    \overline{\mathrm{range}(\tilde{\mathbf A})}
\end{align}
be the propagating subspace. We denote by
\begin{align}
    \mathbb A
    :=
    \tilde{\mathbf A}\big|_{\mathcal D(\tilde{\mathbf A})\cap \mathcal H_r}
\end{align}
the restriction of the Maxwell operator to this subspace.
The operator $\mathbb{A}$ is skew-adjoint on the projected range space. Hence $-\mathbb{A}^2$ is positive self-adjoint. Under the Maxwell compactness property for the chosen realization, the operator $-\mathbb{A}^2$ has compact resolvent on $\mathcal H_r$; see, e.g., \cite{monk2003finite}. Therefore there exists an orthonormal basis $(\boldsymbol{\phi}_j)_{j\geq 1}$ of the considered space and a sequence of strictly positive eigenvalues $(\lambda_j)_{j\geq 1}$, possibly with multiplicity, such that
\begin{align}
    -\mathbb{A}^2 \boldsymbol{\phi}_j
    =
    \lambda_j \boldsymbol{\phi}_j,
    \qquad
    \lambda_j > 0,
    \quad
    \lambda_j \to \infty.
\end{align}
We then define
\begin{align}
    \boldsymbol{\psi}_j
    :=
    \frac{\mathbb{A} \boldsymbol{\phi}_j}{\sqrt{\lambda_j}}.
\end{align}
It follows that for each $j\geq1$, the pair $(\boldsymbol{\phi}_j,\boldsymbol{\psi}_j)$ spans a two-dimensional invariant subspace of $\mathbb{A}$, and satisfies
\begin{align}\label{eq:A eigs}
    \mathbb{A} \boldsymbol{\phi}_j = \sqrt{\lambda_j}\,\boldsymbol{\psi}_j,
    \qquad 
    \mathbb{A} \boldsymbol{\psi}_j = -\sqrt{\lambda_j}\,\boldsymbol{\phi}_j.
\end{align}
In the case of multiple eigenvalues, the construction is understood within each eigenspace. For the full proof, we refer to Appendix \ref{app:sec2:claim}.

For initial data expanded in the $\boldsymbol{\phi}_j$ components,
\begin{align*}
    v_0(x)=\sum_{j\geq1}\alpha_j\boldsymbol{\phi}_j(x),
\end{align*}
the solution to the homogeneous problem
\begin{align}
    \left(\mathbb{A} + \frac{\partial}{\partial t}\right) v(t,x) = 0
\end{align}
is
\begin{align}\label{sin-cos formula}
    v(t,x)
    =
    \sum_{j\geq 1}
    \alpha_j
    \left(
    \cos(\sqrt{\lambda_j} t)\,\boldsymbol{\phi}_j(x)
    -
    \sin(\sqrt{\lambda_j} t)\,\boldsymbol{\psi}_j(x)
    \right),
\end{align}
where
\begin{align}
    \alpha_j
    =
    \int_{\W} \boldsymbol{\phi}_j(x)^{\top} v_0(x)\upd x.
\end{align}

We now return to the forced problem. The solution $\mathbb{U}^{(s,p)}$ can be written as a convolution in time between the source term and the homogeneous propagator. In the same spirit as in \cite{BORCEA2024113272}, this can be interpreted as mapping the source excitation to an effective initial condition.

\begin{remark}
    Since $\mathbb{A}$ is skew-adjoint on the projected range space, $-\mathbb{A}^2$ is positive self-adjoint. We therefore define the source filter through the functional calculus of the positive operator $\Lambda=(-\mathbb{A}^2)^{1/2}$, rather than through $\mathbb{A}$ itself.
\end{remark}

More precisely, we write
\begin{align}\label{eq:causality}
    \mathbb{U}^{(s,p)}(t,x)
    =
    \Big| \hat{f}(\Lambda) \Big|\, v^{(s,p)}(t,x),
    \qquad
    \Lambda=(-\mathbb{A}^2)^{1/2},
\end{align}
where
\begin{align}\label{def: v}
    v^{(s,p)}(t,x)
    :=
    e^{-t\mathbb{A}} v_0^{(s,p)}(x)
\end{align}
solves the homogeneous problem.

Equivalently, using the spectral decomposition of $-\mathbb{A}^2$,
\begin{align}
    \mathbb{U}^{(s,p)}(t,x)
    =
    \sum_{j\geq 1}
    \left|\hat f(\sqrt{\lambda_j})\right|
    \alpha_j^{(s,p)}
    \left[
    \cos(\sqrt{\lambda_j}t)\boldsymbol{\phi}_j(x)
    -
    \sin(\sqrt{\lambda_j}t)\boldsymbol{\psi}_j(x)
    \right],
\end{align}
with
\begin{align}
    \alpha_j^{(s,p)}
    =
    \int_{\W}
    \boldsymbol{\phi}_j(x')^{\top}
    \mathbf{F}^{(s)}(x')\tilde e_p
    \upd x'.
\end{align}

\begin{remark}
    Since $\mathbb{A}$ is skew-adjoint, Stone's theorem implies that $\mathbb{A}$ generates a strongly continuous unitary group $(e^{-t\mathbb{A}})_{t\in\mathbb{R}}$. Thus, the notation $e^{-t\mathbb{A}}v_0$ is understood in this semigroup sense. On each two-dimensional invariant subspace associated with an eigenfunction of $-\mathbb{A}^2$, this unitary evolution is represented by \eqref{sin-cos formula}.
\end{remark}

\begin{remark}
    The representation \eqref{eq:causality} should be understood as the source-filtered formulation used in the ROM construction, following the data transformation in \cite{BORCEA2024113272}. It is not meant as a pointwise identity for the raw causal field before processing. Rather, after the source filtering and time symmetrization of the measured response, the probing pulse is represented by the positive spectral multiplier $|\hat f(\Lambda)|$, while $v^{(s,p)}$ denotes the corresponding homogeneous evolution.
\end{remark}

The effective initial condition is
\begin{align}
    v_0^{(s,p)}(x)
    :=
    \sum_{j\geq 1}
    \left[
    \int_{\W}
    \boldsymbol{\phi}_j(x')^{\top}
    \mathbf{F}^{(s)}(x')\tilde e_p
    \upd x'
    \right]
    \boldsymbol{\phi}_j(x).
\end{align}

Following the data transformation in \cite{BORCEA2024113272}, we now work with the processed array response, obtained from the measured response by source filtering and time symmetrization. This transformation absorbs the effect of the probing pulse into the effective initial condition and allows the data to be written as inner products of homogeneous wave fields. We denote the transformed data matrix by $\mathbb{D}(t)$:
\begin{align}
    \mathbb{D}^{(s',p'),(s,p)}(t)
    :=
    \tilde{e}^{\top}_{p'}
    \int_{\W}
    \mathbf{F}^{(s')}(x)
    \mathbb{U}^{(s,p)}(t,x)
    \upd x.
\end{align}
With the above definition, the transformed data satisfy
\begin{align}\label{eq: D}
    \mathbb{D}^{(s',p'),(s,p)}(t)
    =
    \int_{\W}
    \Big[
    v_0^{(s',p')}(x)
    \Big]^{\top}
    v^{(s,p)}(t,x)
    \upd x.
\end{align}
This data transformation follows the ROM framework developed for wave-based inverse scattering in \cite{borcea2018untangling, borcea2020reduced, borcea2022reduced, borcea2023waveform, borcea2024data}.

\subsection{ROM formulation}

The reduced-order modeling framework used here follows the general approach developed in \cite{BORCEA2024113272}.

\subsubsection{Wave snapshots}

We are interested in the evolution of the wave fields \eqref{def: v} on a uniform time grid given by
\begin{align*}
    \{ t_j = j\t, \quad j\geq0\},
\end{align*}
where the step is chosen according to the Nyquist criterion $\t\lesssim\frac{\pi}{\w_0}$. In order to ease the notation, we define the $6\times6m$ dimensional fields $v_j$, called the wave snapshot at time $t_j$, given by
\begin{align}\label{eq: v_j = (...)}
    v_j(x) = \Big( v^{(1,1)}(t_j,x), \dots, v^{(1,6)}(t_j,x), \dots, v^{(m,1)}(t_j,x), \dots, v^{(m,6)}(t_j,x) \Big).
\end{align}
Since $\mathbb{A}$ is skew-adjoint on the propagating subspace, Stone's theorem implies that it generates a unitary group $(e^{-t\mathbb{A}})_{t\in\mathbb{R}}$. Hence, from \eqref{def: v},
\begin{align}\label{def: v_j}
    v_j(x)=e^{-j\t\mathbb{A}}v_0(x),
\end{align}
for $x\in\W$ and $j\geq0$. In particular, we know that
\begin{align*}
    e^{(j+1)\a} = e^{\a} e^{j\a}, 
\end{align*}
for $\a\in\mathbb{C}$. Thus, we obtain
\begin{align}\label{def: v_j+1}
    v_{j+1}(x) = \mathcal{P}v_j(x), \quad j\geq0, \quad x\in\W,
\end{align}
where $\mathcal{P}$ is the propagator operator, given by
\begin{align}
    \mathcal{P} = e^{-\t\mathbb{A}}.
\end{align}
Thus, \eqref{def: v_j+1} defines a discrete-time dynamical system with initial state $v_0$.

The ROM is an algebraic discrete dynamical system, obtained via the Galerkin projection of \eqref{def: v_j+1} on the space
\begin{align}
    \mathscr{S} := \mathrm{span}\{ \mathcal{V}(x) \}, \quad \mathcal{V}(x) = \Big( v_0(x), \dots, v_{n-1}(x) \Big), \quad x\in\W.
\end{align}
The end time index $n$ is chosen according to the distance from the array at which we wish to image. If the characteristic length of the medium is $L$, then we should have $n c_0 \t \gtrsim 2L$. This construction is standard in data-driven ROM approaches based on wave snapshots and Galerkin projections \cite{BORCEA2024113272}.

\begin{remark}
    The condition $nc_0\tau \gtrsim 2L$ ensures that the observation time window is sufficiently long for waves to propagate across the medium and return to the array. In particular, this guarantees that the collected snapshots contain information about reflections from the entire domain. If this condition is not satisfied, deeper parts of the medium are not adequately probed and the resulting reduced-order model may fail to capture the relevant wave interactions.
\end{remark}

\subsubsection{Data-driven ROM}

The Galerkin approximation of \eqref{def: v_j} is defined by
\begin{align}\label{def: Galerkin}
    v_j^{\mathrm{Gal}}(x) = \mathcal{V}(x) \mathfrak{g}_j, \quad j\geq0, \quad x\in\W,
\end{align}
where $\mathbf{\mathfrak{g}}_j\in\mathbb{R}^{6mn\times6m}$ are the Galerkin coefficients, calculated so that when substituting \eqref{def: Galerkin} in \eqref{def: v_j+1}, the residual is orthogonal to the space $\mathscr{S}$.

In order to obtain the Galerkin equation, we use an orthonormal basis $\mathcal{Z}(x)$, given by
\begin{align*}
    \mathcal{Z}(x) = \Big(z_0(x),\dots,z_{n-1}(x)\Big),
\end{align*}
whose components satisfy
\begin{align}
    \int_{\W} z_j^{\top}(x) z_k(x) \upd x = \mathbf{I}_{6m} \d_{jk}, \quad j,k=0,\dots,n-1.
\end{align}
Since our basis is causal, we have
\begin{align}
    z_j(x) \in \mathrm{span}\Big\{ v_0(x), \cdots, v_j(x) \Big\}, \quad j=0,\dots,n-1.
\end{align}
Explicitly, $\mathcal{Z}$ is defined via the Gram-Schmidt orthogonalisation of $\mathcal{V}$, i.e.
\begin{align}\label{Gram Schmidt}
    \mathcal{V}(x) = \mathcal{Z}(x) R, \quad x\in\W,
\end{align}
where $R$ is block upper triangular.

The equation for the Galerkin coefficients can now be written as
\begin{align}\label{Galerki coefficients}
    \begin{aligned}
        0 &= \int_{\W} \mathcal{Z}^{\top}(x) \Big[ \mathcal{V}(x)\mathfrak{g}_{j+1} - \mathcal{P} \mathcal{V}(x) \mathfrak{g}_j \Big] \upd x \\
        &= R^{-\top} \Big[ \mathbb{M} \mathfrak{g}_{j+1} - \mathbb{S} \mathfrak{g}_{j} \Big], \quad j\geq0,
    \end{aligned}
\end{align}
with initial condition
\begin{align}
    \mathfrak{g}_{0} = i_0 \ \text{ such that } \ v_0^{\mathrm{Gal}}(x) = v_0(x), \quad x\in\W,
\end{align}
where we have introduced the mass matrix $\mathbb{M}$, given by
\begin{align}\label{def: mass matrix}
    \mathbb{M} := \int_{\W} \mathcal{V}^{\top}(x) \mathcal{V}(x) \upd x \in \mathbb{R}^{6nm\times6nm},
\end{align}
and the stiffness matrix $\mathbb{S}$, given by
\begin{align}\label{def: stiffness matrix}
    \mathbb{S} := \int_{\W} \mathcal{V}^{\top}(x) \mathcal{P} \mathcal{V}(x) \upd x \in \mathbb{R}^{6nm\times6nm},
\end{align}
of the Galerkin scheme. This construction is the standard data-driven ROM construction based on snapshot Gram matrices; see, for example, \cite{druskin2016direct, druskin2018nonlinear, borcea2020reduced, borcea2022reduced}. The following results hold for the mass and stiffness matrices.

\begin{lemma}
    It can be shown that
    \begin{align}\label{M=R^T R}
        \mathbb{M} = R^{\top} R
    \end{align}
    and that the first $n$ Galerkin coefficients are the $6nm\times6m$ block columns of $I_{6nm}$,
    \begin{align}\label{g_j=i_j}
        \mathfrak{g}_j = i_j, \quad j=0,\dots,n-1,
    \end{align}
    where $i_j$ denotes the $j$-th block column of $I_{6mn}$.
\end{lemma}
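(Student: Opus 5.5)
The first identity follows directly from the Gram–Schmidt factorization \eqref{Gram Schmidt}. I will substitute $\mathcal{V}=\mathcal{Z}R$ into the definition \eqref{def: mass matrix} of $\mathbb{M}$. Since $R$ does not depend on $x$, it can be pulled out of the integral:
\begin{align*}
\mathbb{M}=\int_{\W}R^{\top}\mathcal{Z}^{\top}(x)\mathcal{Z}(x)R\,\upd x=R^{\top}\Big(\int_{\W}\mathcal{Z}^{\top}\mathcal{Z}\,\upd x\Big)R .
\end{align*}
The $(j,k)$ block of the middle integral is $\int_\W z_j^\top z_k\,\upd x=\mathbf{I}_{6m}\d_{jk}$, so the bracket equals $I_{6nm}$. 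This gives \eqref{M=R^T R}. The one point I will state explicitly is that $R$ is invertible, i.e.\ that the snapshots $v_0,\dots,v_{n-1}$ are linearly independent (columnwise). This is implicitly assumed already in \eqref{Galerki coefficients}, where $R^{-\top}$ appears.

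For \eqref{g_j=i_j}, I will argue by induction on $j$, using \eqref{Galerki coefficients} in the form $\mathbb{M}\mathfrak{g}_{j+1}=\mathbb{S}\mathfrak{g}_j$ after multiplying by $R^{\top}$. This recursion determines $\mathfrak{g}_{j+1}$ uniquely because $\mathbb{M}=R^\top R$ is invertible.

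The base case is $j=0$. Since $v_0=\mathcal{V}i_0$ and $\mathcal{V}$ has full column rank, the condition $\mathcal{V}\mathfrak{g}_0=v_0$ forces $\mathfrak{g}_0=i_0$.

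For the inductive step, assume $\mathfrak{g}_j=i_j$ with $j\le n-2$. Then
\begin{align*}
\mathbb{S}i_j=\int_\W\mathcal{V}^\top\mathcal{P}v_j\,\upd x=\int_\W\mathcal{V}^\top v_{j+1}\,\upd x
\end{align*}
by \eqref{def: v_j+1}. Because $j+1\le n-1$, the snapshot $v_{j+1}=\mathcal{V}i_{j+1}$ lies in $\mathscr{S}$, so the right-hand side equals $\mathbb{M}i_{j+1}$. Hence $i_{j+1}$ satisfies the Galerkin equation, and by uniqueness $\mathfrak{g}_{j+1}=i_{j+1}$. An equivalent way to see this step is that the residual $\mathcal{V}i_{j+1}-\mathcal{P}\mathcal{V}i_j=v_{j+1}-v_{j+1}=0$ vanishes identically, so it is trivially orthogonal to $\mathscr{S}$.

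I expect no real obstacle in this argument. The only subtle points are bookkeeping. First, the induction must stop at $j+1=n-1$, because $v_n\notin\mathscr{S}$ in general, and this is exactly why the claim is restricted to $j=0,\dots,n-1$. Second, the full-rank assumption on the snapshot matrix is what guarantees invertibility of $R$ and $\mathbb{M}$, and hence uniqueness of the Galerkin coefficients.
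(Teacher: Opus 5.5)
Your proof is correct and follows the same standard argument that the paper defers to via \cite{BORCEA2024113272}. The identity $\mathbb{M}=R^\top R$ comes from substituting the Gram--Schmidt factorization and using the orthonormality of $\mathcal{Z}$; the identity $\mathfrak{g}_j=i_j$ holds because the snapshots $v_j$, $j\le n-1$, lie in $\mathscr{S}$, so the Galerkin residual vanishes, and the invertibility of $\mathbb{M}$ (the full-rank assumption you rightly make explicit, already implicit in the paper's use of $R^{-1}$) gives uniqueness.
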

\begin{proof}
    The proof follows the same logic as in \cite{BORCEA2024113272}.
\end{proof}

\begin{lemma}\label{lemma:M and S properties}
    We can show that both the mass and the stiffness matrix are data driven, i.e.
    \begin{align}\label{data driven mass matrix}
        \mathbb{M}_{j,l} = 
        \begin{cases}
            \mathbb{D}(t_{l-j}), \quad &0\leq j \leq l \leq n-1,\\
            \mathbb{D}(t_{j-l})^{\top}, \quad &0\leq l \leq j \leq n-1
        \end{cases}
    \end{align}
    and
    \begin{align}\label{data driven stiffness matrix}
        \mathbb{S}_{j,l} = 
        \begin{cases}
            \mathbb{D}(t_{l+1-j}), \quad &0\leq j \leq l \leq n-1,\\
            \mathbb{D}(t_{j-1-l})^{\top}, \quad &0\leq l \leq j-1, \text{ with } j=2,\dots,n-1.
        \end{cases}
    \end{align}
\end{lemma}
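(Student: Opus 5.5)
The plan is to compute each block of $\mathbb{M}$ and $\mathbb{S}$ directly from the definitions \eqref{def: mass matrix} and \eqref{def: stiffness matrix}, using two facts: the snapshots are generated by the unitary group, and the data are given by the inner-product formula \eqref{eq: D}. By \eqref{def: v_j}, $v_j = e^{-j\t\mathbb{A}}v_0$. Stone's theorem, applied to the skew-adjoint $\mathbb{A}$, gives that $e^{-t\mathbb{A}}$ is unitary with adjoint $(e^{-t\mathbb{A}})^{*} = e^{t\mathbb{A}}$. Hence, for any $j,l\geq0$,
\begin{align*}
    \mathbb{M}_{j,l}
    = \int_{\W} v_j^{\top} v_l \upd x
    = \int_{\W} \big(e^{-j\t\mathbb{A}}v_0\big)^{\top} e^{-l\t\mathbb{A}}v_0 \upd x
    = \int_{\W} v_0^{\top}\, e^{-(l-j)\t\mathbb{A}} v_0 \upd x .
\end{align*}
The integral of a $6\times 6m$ by $6\times 6m$ product is understood blockwise, giving a $6m\times 6m$ matrix whose $((s',p'),(s,p))$ entry is $\int v_0^{(s',p')\top} v^{(s,p)}(t_{l-j},x)\upd x$.

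For $l\geq j$ this entry is exactly $\mathbb{D}^{(s',p'),(s,p)}(t_{l-j})$ by \eqref{eq: D}. For $l<j$, I move the group to the other factor using unitarity and then transpose:
\begin{align*}
    \int_{\W} v_0^{\top} e^{-(l-j)\t\mathbb{A}}v_0\upd x
    = \int_{\W} \big(e^{-(j-l)\t\mathbb{A}}v_0\big)^{\top} v_0 \upd x
    = \Big(\int_{\W} v_0^{\top} v_{j-l}\upd x\Big)^{\top}
    = \mathbb{D}(t_{j-l})^{\top}.
\end{align*}
This gives \eqref{data driven mass matrix}. It also shows that $\mathbb{M}$ is block-symmetric and that both cases agree on the diagonal.

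For the stiffness matrix, $\mathcal{P}v_l = e^{-\t\mathbb{A}}v_l = v_{l+1}$ by \eqref{def: v_j+1}. Therefore $\mathbb{S}_{j,l} = \int_{\W} v_j^{\top} v_{l+1}\upd x$, which is the same computation with $l$ replaced by $l+1$. When $l+1\geq j$, which holds in particular for $j\leq l$, we get $\mathbb{D}(t_{l+1-j})$. When $l+1\leq j$, i.e.\ $l\leq j-1$, we get $\mathbb{D}(t_{j-1-l})^{\top}$. On the overlap $l=j-1$ the two expressions coincide, since both equal $\mathbb{D}(t_0)$ and $\mathbb{D}(t_0)=\mathbb{M}_{0,0}$ is symmetric. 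This yields \eqref{data driven stiffness matrix}. The restriction $j\geq 2$ in the statement is harmless: the formula in fact holds for all $l\leq j-1$.

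The one point needing care is justifying the adjoint step. We need $(e^{-t\mathbb{A}})^{*}=e^{t\mathbb{A}}$ with respect to the $L^2(\W)^6$ inner product, applied to the vector-valued fields $v_0^{(s,p)}\in\mathcal{H}_r$. This follows from the skew-adjointness in Lemma \ref{lem: tilde A skew sym} restricted to $\mathcal{H}_r$. Concretely, one can verify it term by term with the spectral representation \eqref{sin-cos formula}: on each invariant pair $(\boldsymbol{\phi}_j,\boldsymbol{\psi}_j)$ the evolution is a rotation by angle $\sqrt{\lambda_j}\,t$, whose transpose is the rotation by $-\sqrt{\lambda_j}\,t$. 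Identifying the resulting inner products with the processed data $\mathbb{D}$ then relies only on \eqref{eq: D}, which we take as given from the data transformation.
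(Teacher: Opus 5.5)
Your proposal is correct and follows essentially the same route as the paper: you write $v_j=e^{-j\tau\mathbb{A}}v_0$ and use unitarity of the group to reduce $\int_\Omega v_j^\top v_l\,\upd x$ (and $\int_\Omega v_j^\top \mathcal{P}v_l\,\upd x=\int_\Omega v_j^\top v_{l+1}\,\upd x$) to $\mathbb{D}$ evaluated at the time lag, transposing for the lower blocks. The differences are cosmetic. The paper gets the lower mass blocks from the symmetry of $\mathbb{M}$ and the lower stiffness blocks via $\mathcal{P}^\top v_j=v_{j-1}$, whereas you apply the adjoint identity directly; this also cleanly covers the case $l=j-1$ and shows that the restriction $j\geq 2$ is unnecessary.
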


\begin{proof}
    See Appendix \ref{app:sec1:lemma:M and S properties}.
\end{proof}

These identities show that the ROM can be constructed directly from the data, a key feature of the approach introduced in \cite{borcea2024data}.

\subsubsection{The ROM}

We define the ROM to be the discrete time analog of \eqref{def: v_j+1}, with states
\begin{align}\label{ROM}
    v_j^{\mathrm{ROM}} = R \mathfrak{g}_j, \quad j\geq0,
\end{align}
called the ROM snapshots. The first $n$ such snapshots are the $6nm\times6m$ block columns of $R$, as follows from \eqref{g_j=i_j},
\begin{align}
    \Big(v_0^{\text{ROM}},\dots,v_{n-1}^{\text{ROM}}\Big) = R.
\end{align}

\begin{theorem}
    The ROM snapshots evolve as follows
    \begin{align}\label{def:ROM}
        v_{j+1}^{\mathrm{ROM}} = \mathcal{P}^{\mathrm{ROM}} v_j^{\mathrm{ROM}},
    \end{align}
    where
    \begin{align}\label{P_ROM = R-T S R-1}
        \mathcal{P}^{\mathrm{ROM}} := R^{-\top} \mathbb{S} R^{-1} 
    \end{align}
is the $6nm\times6nm$ ROM propagator matrix.
\end{theorem}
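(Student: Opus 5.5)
The plan is to start from the Galerkin equation \eqref{Galerki coefficients} and convert it, by a change of variables, into a recursion for $v_j^{\mathrm{ROM}}=R\mathfrak{g}_j$. First, $R$ is invertible: it is the block upper triangular factor of the Gram--Schmidt orthogonalisation \eqref{Gram Schmidt}, and I assume, as the construction of the orthonormal basis $\mathcal{Z}$ already does, that the snapshots $v_0,\dots,v_{n-1}$ are linearly independent. Then $\mathbb{M}=R^{\top}R$ from \eqref{M=R^T R} is positive definite and $R$ has nonzero diagonal blocks. Hence $R^{-\top}$ is invertible, and the factor $R^{-\top}$ in front of the bracket in \eqref{Galerki coefficients} can be dropped. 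This gives
\begin{align*}
    \mathbb{M}\,\mathfrak{g}_{j+1}=\mathbb{S}\,\mathfrak{g}_j,\qquad j\geq 0.
\end{align*}

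Next, substitute \eqref{M=R^T R}, so that $R^{\top}R\,\mathfrak{g}_{j+1}=\mathbb{S}\,\mathfrak{g}_j$, and multiply on the left by $R^{-\top}$. This yields $R\,\mathfrak{g}_{j+1}=R^{-\top}\mathbb{S}\,\mathfrak{g}_j$. Inserting the identity $I=R^{-1}R$ on the right-hand side gives
\begin{align*}
    R\,\mathfrak{g}_{j+1}=R^{-\top}\mathbb{S}R^{-1}\,(R\,\mathfrak{g}_j).
\end{align*}
By the definition \eqref{ROM} of the ROM snapshots, this is exactly $v_{j+1}^{\mathrm{ROM}}=\mathcal{P}^{\mathrm{ROM}}v_j^{\mathrm{ROM}}$ with $\mathcal{P}^{\mathrm{ROM}}=R^{-\top}\mathbb{S}R^{-1}$, which is \eqref{def:ROM}--\eqref{P_ROM = R-T S R-1}. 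The dimension count is immediate: $R,\mathbb{S}\in\mathbb{R}^{6nm\times6nm}$.

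As a consistency check, I would also verify the derivation of the middle line of \eqref{Galerki coefficients} itself, since the argument relies on it. By \eqref{Gram Schmidt}, $\mathcal{Z}=\mathcal{V}R^{-1}$, so
\begin{align*}
    \int_{\W}\mathcal{Z}^{\top}\mathcal{V}\,\upd x=R^{-\top}\mathbb{M},\qquad
    \int_{\W}\mathcal{Z}^{\top}\mathcal{P}\mathcal{V}\,\upd x=R^{-\top}\mathbb{S},
\end{align*}
using the definitions \eqref{def: mass matrix} and \eqref{def: stiffness matrix}. Finally, since $\mathfrak{g}_j=i_j$ by \eqref{g_j=i_j}, the first $n$ ROM states are the block columns of $R$. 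The recursion then propagates them and extends them beyond $j=n-1$ consistently with the Galerkin definition.

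The only genuine obstacle is the invertibility of $R$, equivalently the positive definiteness of $\mathbb{M}$. In practice this can fail through near-linear dependence of the snapshots, and the statement implicitly assumes it. I would state this assumption explicitly, or note that it holds whenever the Gram matrix \eqref{def: mass matrix} of the snapshots is nonsingular. Everything else is algebraic manipulation of \eqref{Galerki coefficients}.
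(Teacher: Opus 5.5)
Your proposal is correct and follows essentially the same argument as the paper. You cancel the factor $R^{-\top}$ in the Galerkin equation, use $\mathbb{M}=R^{\top}R$ to obtain $R\mathfrak{g}_{j+1}=R^{-\top}\mathbb{S}\mathfrak{g}_j$, and insert $R^{-1}R$ to identify $v_j^{\mathrm{ROM}}=R\mathfrak{g}_j$. You also state explicitly that $R$ must be invertible, equivalently that the snapshot Gram matrix $\mathbb{M}$ is nonsingular; this is a worthwhile clarification of an assumption the paper uses implicitly whenever it writes $R^{-\top}$.
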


\begin{proof}
    In order to obtain this, we see from \eqref{Galerki coefficients} that
    \begin{align*}
        0 = R^{-\top} \Big[\mathbb{M} \mathfrak{g}_{j+1} - \mathbb{S} \mathfrak{g}_j\Big] \Rightarrow R^{-\top} \mathbb{M} \mathfrak{g}_{j+1} = R^{-\top} \mathbb{S} \mathfrak{g}_j
    \end{align*}
    and applying \eqref{M=R^T R}, we get
    \begin{align*}
        R \mathfrak{g}_{j+1} = R^{-\top} \mathbb{S} \mathfrak{g}_j &\Rightarrow v_{j+1}^{\mathrm{ROM}} = R^{-\top} \mathbb{S} R^{-1} R \mathfrak{g}_{j} \\
        &\Rightarrow v_{j+1}^{\mathrm{ROM}} = R^{-\top} \mathbb{S} R^{-1} v_{j}^{\mathrm{ROM}} =: \mathcal{P}^{\mathrm{ROM}} v_{j}^{\mathrm{ROM}},
    \end{align*}
    which gives the desired result.
\end{proof}

\begin{remark}
    From \eqref{data driven mass matrix} and \eqref{data driven stiffness matrix}, we see that both $R$ and $\mathbb{M}$ are data driven and so, the ROM can be computed directly from $\mathbb{D}(t_j)$, for $j=0,\dots,n$.
\end{remark}

\section{Reduction to scalar wave equation}\label{sec:Scalar}

In this section, we show how the full three-dimensional Maxwell system introduced in Section \ref{sec:Problem Formulation} can be reduced to a one-dimensional scalar wave equation under appropriate assumptions on the geometry and polarization of the fields. This reduction provides the connection between the electromagnetic formulation and the one-dimensional layered media model considered later in this work.

In the reduced-order modeling framework introduced in Section \ref{sec:Problem Formulation}, the effect of the source term can be incorporated into an equivalent initial condition using a Duhamel-type representation. Consequently, in the following, we consider the homogeneous Maxwell system
\begin{align}
    (\partial_t+\mathbb A)\mathbb U=0,
\end{align}
with initial data determined by the excitation. This allows us to focus on the propagation properties of the wave field.

Let $\Omega\subset\mathbb R^3$. In this section we specialize to a layered isotropic medium, with scalar coefficients depending only on one spatial variable:
\begin{align}\label{assumpt 1}
    \varepsilon(x)=\varepsilon(x_1),
    \qquad
    \mu(x)=\mu(x_1).
\end{align}
This corresponds to a medium composed of planar layers orthogonal to the $x_1$-axis. We further consider transverse electromagnetic fields that depend only on $x_1$ and are polarized orthogonally to the direction of variation:
\begin{align}\label{assumpt 2}
    \mathbf E(t,x)
    =
    \begin{pmatrix}
        0\\
        E_2(t,x_1)\\
        0
    \end{pmatrix},
    \qquad
    \mathbf B(t,x)
    =
    \begin{pmatrix}
        0\\
        0\\
        B_3(t,x_1)
    \end{pmatrix}.
\end{align}

\begin{theorem}\label{thm:scalar}
    Under the assumptions \eqref{assumpt 1}-\eqref{assumpt 2}, the source-free Maxwell equations \eqref{eq:Maxwell} reduce to the one-dimensional first-order system
    \begin{align}\label{eq:scalar_first_order_maxwell}
        \left\{
        \begin{aligned}
            \frac{\partial B_3}{\partial t}
            &=
            -\frac{\partial E_2}{\partial x_1},
            \\
            \varepsilon(x_1)\frac{\partial E_2}{\partial t}
            &=
            -
            \frac{\partial}{\partial x_1}
            \left(
            \mu(x_1)^{-1}B_3
            \right).
        \end{aligned}
        \right.
    \end{align}
    Consequently, $E_2$ satisfies the scalar wave equation in divergence form
    \begin{align}\label{eq:E_scalar_variable_mu}
        \varepsilon(x_1)
        \frac{\partial^2 E_2}{\partial t^2}
        -
        \frac{\partial}{\partial x_1}
        \left(
        \mu(x_1)^{-1}
        \frac{\partial E_2}{\partial x_1}
        \right)
        =
        0.
    \end{align}
    Equivalently,
    \begin{align}
        \frac{\partial^2 E_2}{\partial t^2}
        -
        \varepsilon(x_1)^{-1}
        \frac{\partial}{\partial x_1}
        \left(
        \mu(x_1)^{-1}
        \frac{\partial E_2}{\partial x_1}
        \right)
        =
        0.
    \end{align}
    The magnetic flux component satisfies
    \begin{align}\label{eq:B_scalar_variable_mu}
        \frac{\partial^2 B_3}{\partial t^2}
        =
        \frac{\partial}{\partial x_1}
        \left[
        \varepsilon(x_1)^{-1}
        \frac{\partial}{\partial x_1}
        \left(
        \mu(x_1)^{-1}B_3
        \right)
        \right].
    \end{align}
    In a layer where $\varepsilon$ and $\mu$ are constant, \eqref{eq:E_scalar_variable_mu} reduces to
    \begin{align}
        \frac{\partial^2 E_2}{\partial t^2}
        -
        c(x_1)^2
        \frac{\partial^2 E_2}{\partial x_1^2}
        =
        0,
        \qquad
        c(x_1)=\frac{1}{\sqrt{\varepsilon(x_1)\mu(x_1)}}.
    \end{align}
\end{theorem}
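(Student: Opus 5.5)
The proof is a direct computation: substitute the ansatz \eqref{assumpt 1}--\eqref{assumpt 2} into the source-free version of \eqref{eq:Maxwell}, with $\mathbf J_e=\mathbf J_m=0$ and $\boldsymbol{\ve}=\varepsilon(x_1)\mathbf I$. Then eliminate one unknown by cross-differentiation.

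First I compute the curls. For $\mathbf E=(0,E_2(t,x_1),0)^\top$, only the $x_1$-derivative survives, so $\nabla\times\mathbf E=(0,0,\partial_{x_1}E_2)^\top$. For $\mu^{-1}\mathbf B=(0,0,\mu(x_1)^{-1}B_3)^\top$, we get $\nabla\times(\mu^{-1}\mathbf B)=(0,-\partial_{x_1}(\mu^{-1}B_3),0)^\top$.

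Inserting these into the two lines of \eqref{eq:Maxwell}:
\begin{itemize}
\item The third component of the first equation gives $\partial_tB_3=-\partial_{x_1}E_2$.
\item The second component of the second equation gives $\partial_{x_1}(\mu^{-1}B_3)+\varepsilon\,\partial_tE_2=0$, which is the second line of \eqref{eq:scalar_first_order_maxwell}.
\item All remaining components are $0=0$.
\end{itemize}
This last point also shows that the ansatz is consistent, i.e.\ it is invariant under the dynamics. The divergence constraints hold automatically, because $E_2$ and $B_3$ do not depend on $x_2$ or $x_3$.

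For \eqref{eq:E_scalar_variable_mu}, I differentiate the second equation of \eqref{eq:scalar_first_order_maxwell} in $t$. Since $\varepsilon$ and $\mu$ are time independent, $\partial_t$ commutes with multiplication by $\mu^{-1}$ and with $\partial_{x_1}$. This gives $\varepsilon\,\partial_t^2E_2=-\partial_{x_1}(\mu^{-1}\partial_tB_3)$. Substituting $\partial_tB_3=-\partial_{x_1}E_2$ yields \eqref{eq:E_scalar_variable_mu}, and dividing by $\varepsilon>0$ gives the equivalent form.

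For \eqref{eq:B_scalar_variable_mu}, I differentiate the first equation in $t$ to get $\partial_t^2B_3=-\partial_{x_1}\partial_tE_2$. I then replace $\partial_tE_2$ by $-\varepsilon^{-1}\partial_{x_1}(\mu^{-1}B_3)$.

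In a layer with constant coefficients, $\mu^{-1}$ can be pulled out of the derivative, so the equation becomes $\partial_t^2E_2-(\varepsilon\mu)^{-1}\partial_{x_1}^2E_2=0$, that is, the wave equation with $c^2=(\varepsilon\mu)^{-1}$.

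The one real subtlety is regularity. For piecewise constant (layered) coefficients, $\partial_{x_1}(\mu^{-1}\partial_{x_1}E_2)$ and the interchange of derivatives only make sense in the distributional or weak sense. I would therefore state that the identities hold in $\mathcal D'$. Equivalently, they amount to the interface conditions that $E_2$ and $\mu^{-1}B_3$ (the tangential fields) are continuous across the layer boundaries. These conditions are encoded in the divergence form, and the layer-wise equation holds in the interior of each layer. Differentiating in $t$ is legitimate because the coefficients do not depend on time, so the mixed derivatives commute in the sense of distributions.
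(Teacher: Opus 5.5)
Your proposal is correct and follows the paper's proof essentially step for step. Both use the same explicit curl computations, substitute them into the source-free system, differentiate in time and cross-substitute to get the equations for $E_2$ and $B_3$, and pull $\mu^{-1}$ out of the derivative in a homogeneous layer. Your extra remarks on the consistency of the ansatz, the divergence constraints and the distributional reading (continuity of $E_2$ and $\mu^{-1}B_3$ at interfaces) do not appear in the paper's proof, but they are accurate refinements.
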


\begin{proof}
    See Appendix \ref{app:sec2:thm:scalar}.
\end{proof}

Assuming time-harmonic solutions of the form
\begin{align*}
    E_2(t,x_1)=u(x_1)e^{-i\omega t},
\end{align*}
we obtain the one-dimensional Helmholtz equation
\begin{align}\label{eq:helmholtz_variable_mu}
    \frac{\upd}{\upd x_1}
    \left(
    \mu(x_1)^{-1}
    \frac{\upd u}{\upd x_1}
    \right)
    +
    \omega^2\varepsilon(x_1)u
    =
    0.
\end{align}
Inside a homogeneous layer, where $\varepsilon$ and $\mu$ are constant, this becomes
\begin{align}
    \frac{\upd^2 u}{\upd x_1^2}
    +
    \omega^2\mu(x_1)\varepsilon(x_1)u
    =
    0,
\end{align}
or equivalently
\begin{align}
    \frac{\upd^2 u}{\upd x_1^2}
    +
    \frac{\omega^2}{c(x_1)^2}u
    =
    0.
\end{align}
This reduction leads to the class of one-dimensional layered media, where the wave speed is piecewise constant:
\begin{align}
    c(x_1)=c_k,
    \qquad
    x_1\in(z_k,z_{k+1}),
    \quad k\in\mathbb Z.
\end{align}
At interfaces, the Maxwell transmission conditions correspond to continuity of the tangential electric field and of the tangential magnetic field. In the scalar formulation, this gives continuity of $u$ and of $\mu^{-1}u'$ across interfaces. In each layer, the scalar wave equation admits explicit solutions, with reflection and transmission generated by these interface conditions. A particularly important example is the Goupillaud medium, in which layer thicknesses are proportional to the local wave speed. This model provides a convenient framework for both analytical calculations and numerical simulations.

\section{One-dimensional layered media}\label{sec:Goup}

Layered media provide a classical setting in which multiple scattering and wave reflections can be described explicitly \cite{fouque2007wave}. Related reduced-order approaches in layered media include \cite{borcea2021reduced}. In Section \ref{sec:Scalar}, we showed that, under suitable assumptions on the polarization and spatial variability of the fields, Maxwell's equations reduce to a one-dimensional scalar wave equation with variable electromagnetic coefficients. We now specialize this reduced Maxwell model to layered media and introduce an electromagnetic analogue of the Goupillaud construction. The goal is to obtain a benchmark in which propagation, reflection, transmission and multiple scattering are represented explicitly, while remaining directly connected to the electromagnetic formulation.

The classical Goupillaud model is formulated for acoustic waves in layered media. Here we adapt the same idea to the reduced Maxwell setting by working with electromagnetic impedance variables and imposing an equal travel-time condition across layers. In these variables, the right- and left-going wave amplitudes satisfy explicit interface scattering relations, and the equal travel-time assumption leads to an exact discrete-time propagation model. Thus, the model considered below should be understood as a Maxwell-adapted Goupillaud medium: it preserves the ROM-relevant mechanisms of two-way propagation, reflection and transmission at interfaces, multiple scattering, and data-driven reduced dynamics.

\subsection{Layered medium and wave propagation}

We consider the scalar wave equation obtained in Section \ref{sec:Scalar}, written here in the form
\begin{align}\label{wave eq}
    \frac{\partial^2}{\partial t^2} u(t,z) - c(z)^2 \frac{\partial^2}{\partial z^2} u(t,z) = 0,
\end{align}
where the wave speed $c(z)$ is piecewise constant. This corresponds to a layered medium composed of layers in which the physical parameters are constant. To make the connection with the underlying physical model explicit, we recall that this equation can be derived from the one-dimensional acoustic system
\begin{align}
    \left\{
    \begin{aligned}
        \rho(z)\partial_t u + \partial_z p &= 0, \\
        \frac{1}{K(z)}\partial_t p + \partial_z u &= 0,
    \end{aligned}
    \right. 
\end{align}
where $\rho(z)$ is the density and $K(z)$ the bulk modulus. The wave speed and impedance are given by
\begin{align}
    c(z) = \sqrt{\frac{K(z)}{\rho(z)}} \quad \text{and} \quad \zeta(z) = \sqrt{K(z)\rho(z)}.
\end{align}
In particular, we observe for the impedance $\z(\cdot)$ that it can be written as follows:
\begin{align}
    \zeta(z) = c(z)\rho(z)
\end{align}
Introducing the right- and left-going modes
\begin{align}\label{right and left}
    \left\{
    \begin{aligned}
        A(t,z) &= \zeta(z)^{-1/2}p(t,z) + \zeta(z)^{1/2}u(t,z),\\
        B(t,z) &= -\zeta(z)^{-1/2}p(t,z) + \zeta(z)^{1/2}u(t,z),
    \end{aligned}
    \right.
\end{align}
we obtain the transport equations
\begin{align}
    \partial_z A(t,z) + \frac{1}{c(z)}\partial_t A(t,z) = 0 \quad \text{and} \quad \partial_z B(t,z) - \frac{1}{c(z)}\partial_t B(t,z) = 0.
\end{align}
In addition, since our medium is composed of $n$ layers, we have that
\begin{equation}
    c(z) = c_k \quad \text{ and } \quad \z(z) = \z_k = \rho_k c_k, \quad z \in (z_{k-1}, z_k)
\end{equation}
and so, we can write the solutions of the wave equation \eqref{wave eq} in the following form:
\begin{equation}
    u_k(t,z) = A_k\left(t - \frac{z}{c_k}\right) + B_k\left(t + \frac{z}{c_k}\right),
\end{equation}
where $A_k$ represents right-going waves and $B_k$ represents left-going waves, as described in \eqref{right and left}, for $k=1,2,\dots,n$. At each interface $x=x_k$, continuity conditions give rise to reflection and transmission of the right- and left-going waves. In the impedance-normalized variables used here, the corresponding coefficients are
\begin{align}
    R_k &= \frac{\z_k-\z_{k+1}}{\z_k+\z_{k+1}} \quad \text{ and } \quad
    T_k = \frac{2\sqrt{\z_k \z_{k+1}}}{\z_k+\z_{k+1}}.
\end{align}
for $k=1,2,\dots,n$.

\subsection{Goupillaud medium}

We now introduce a particular class of layered media, known as Goupillaud media, defined by the condition that each layer has the same travel time:
\begin{align}
    \frac{L_j - L_{j-1}}{c_j} = \tau, \quad \text{for all } j.
\end{align}
This assumption plays a crucial role as it implies that waves propagate from one interface to the next in a fixed time $\tau>0$. As a consequence, scattering events occur only at discrete times $t = k\tau$, $k \in \mathbb{N}$, which induces a natural discrete-time dynamical system. Between these discrete times, the right- and left-going waves propagate freely, while reflections and transmissions occur instantaneously at the interfaces. The wave field at time $t = k\tau$ can therefore be written as
\begin{align}
    v_k(z) = \sum_{j=0}^N \left(A_j^k + B_j^k\right)\,\delta(z - L_j),
\end{align}
where $A_j^k$ and $B_j^k$ represent the amplitudes of right- and left-going waves at interface $L_j$. These amplitudes satisfy the recurrence relation
\begin{align}
    \begin{pmatrix}
        A_j^{k+1} \\ B_j^{k+1}
    \end{pmatrix}
    =
    \begin{pmatrix}
        T_j & -R_j \\
        R_j & T_j
    \end{pmatrix}
    \begin{pmatrix}
        A_{j-1}^k \\ B_{j+1}^k
    \end{pmatrix},
\end{align}
where $R_j$ and $T_j$ are the reflection and transmission coefficients.

\begin{figure}[ht]
\centering
\includegraphics[width=0.85\textwidth]{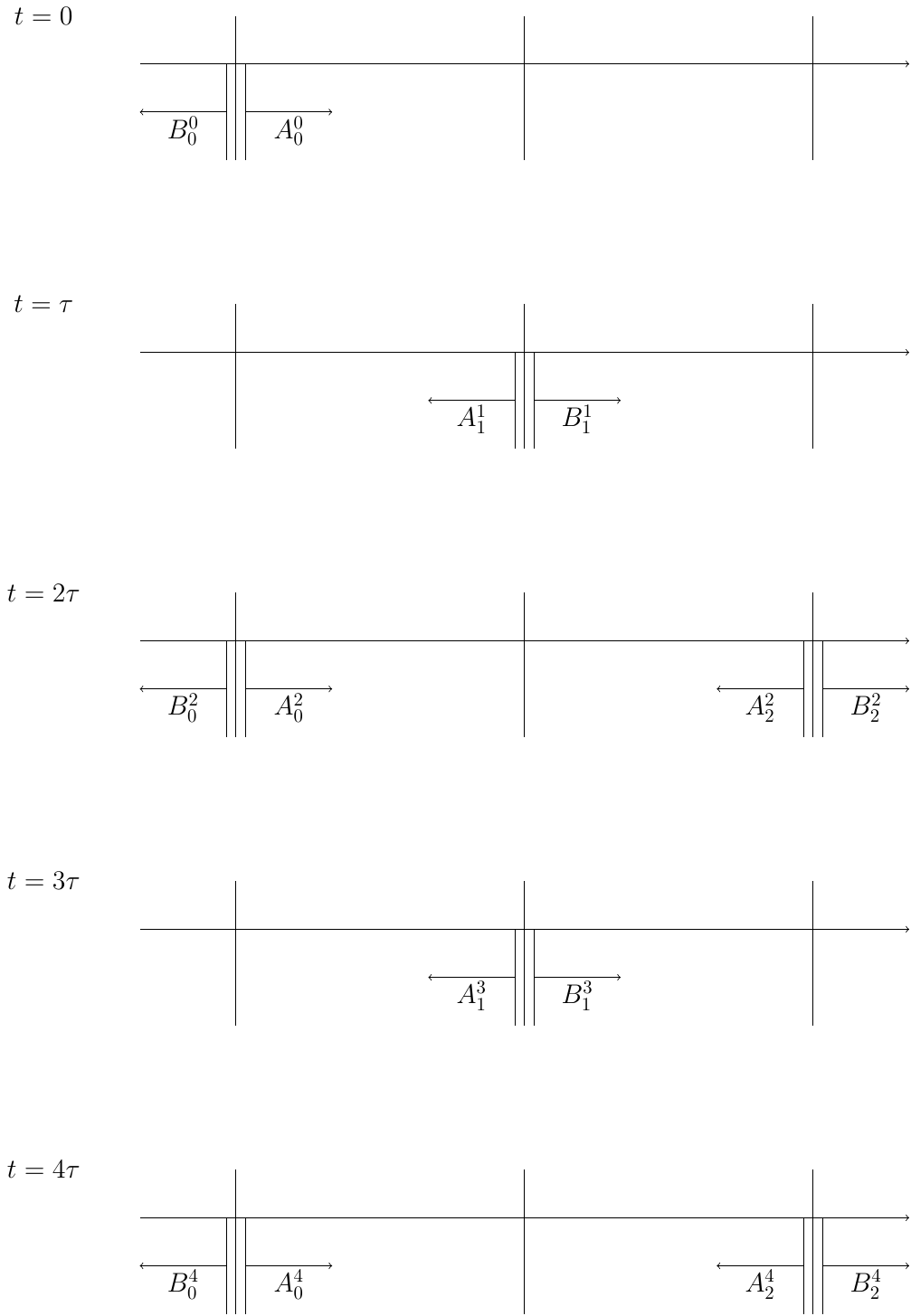}
\caption{
Propagation in a Goupillaud medium. The equal travel-time condition ensures that waves move from one interface to the next in a single time step $\tau$, leading to a discrete dynamical system for the amplitudes $A_j^k$ and $B_j^k$.
}
\end{figure}

\subsection{Connection to ROM formulation}

We consider measurements of the wave field at discrete times $t = k\tau$. The sequence of states $(v_k)_{k=1}^{n}$ defines a collection of snapshots of the system. The observed data are given by
\begin{align}
    D(k\tau) = \langle v_0, v_k \rangle,
\end{align}
as also described by \eqref{eq: D}, which depends on the reflection and transmission coefficients through multiple scattering interactions. Each data point corresponds to a superposition of contributions associated with different scattering paths. The discrete evolution of the wave field takes the form
\begin{align}
    v_{k+1} = \mathcal{P} v_k,
\end{align}
where $\mathcal{P}$ is the propagator over one time step. This structure matches exactly the reduced-order modeling framework introduced in Section \ref{sec:Problem Formulation}. In particular, from Lemma \ref{lemma:M and S properties}, we see that the reduced-order model can be constructed directly from the observed data.

This explicit connection between wave propagation, discrete dynamics, and data-driven reduced-order modeling makes the layered medium an ideal test case for analyzing the stability and performance of ROM-based inversion methods.

\subsection{Inverse problem and objective functions}

We now turn to the inverse problem associated with the layered medium. The goal is to recover the impedance profile
\begin{align}
    \boldsymbol{\z} = (\z_1, \z_2, \dots, \z_n)
\end{align}
from measurements of the reflected wave.

\subsubsection{Forward map}

As described above, the wave propagation through the layered medium gives rise to a sequence of observations
\begin{align}
    \mathbb{D}_k = \mathbb{D}(k\t) = \langle v_0, v_k \rangle
\end{align}
which depend on the impedance parameters through multiple scattering effects. This defines a nonlinear forward map
\begin{align}
    \boldsymbol{\z} \mapsto \mathbb{D}(\boldsymbol{\z}).
\end{align}
This map is highly nonlinear since the waves interact with multiple interfaces, which lead to products of reflection and transmission coefficients.

A natural approach consists of minimizing the difference between modeled and observed data:
\begin{align}
    \mathscr{O}^{\mathrm{FWI}}(\tilde{\boldsymbol{\zeta}}) = \t \sum_{k=1}^{n-1}\| \mathbb{D}(k\t;\boldsymbol{\zeta}) - \mathbb{D}(k\t;\tilde{\boldsymbol{\zeta}}) \|^2.
\end{align}
This objective directly compares the data in the observation space. However, it is sensitive to amplitude errors and scaling effects, which may arise from modeling inaccuracies or measurement noise.

\begin{remark}
    The quadratic data misfit $O^{FWI}$ can be interpreted in a Bayesian framework as a maximum a posteriori estimator under the assumption of additive white Gaussian noise on the data. In this setting, minimizing the $L^2$ discrepancy between observed and modeled data is statistically optimal. This observation explains why the classical misfit is expected to perform well when the perturbations are predominantly unstructured and additive.    
\end{remark}

\subsubsection{Inverse problem}

We formulate the inverse problem as an optimization problem. Given observed data corresponding to an unknown impedance profile $\tilde{\boldsymbol{\z}}$, we seek to recover $\boldsymbol{\z}$ by solving
\begin{align}
    \min_{\boldsymbol{\z}\in\mathcal{Z}} \; \mathscr{O}(\boldsymbol{\z}) + \text{regularization}.
\end{align}
Following the reduced-order modeling framework, the inversion can be formulated in terms of the internal wavefield. Denoting by $v_k(\boldsymbol{\z})$ the true wavefield snapshots and by $v_k^{\mathrm{est}}(\boldsymbol{\z})$ their ROM-based reconstruction, we define the objective
\begin{align}
    \mathscr{O}(\boldsymbol{\z}) =
    \sum_{k=0}^{n_t-1}
    \left\| v_k^{\mathrm{est}}(\boldsymbol{\z}) - v_k(\boldsymbol{\z}) \right\|^2_F.
\end{align}
As in the general ROM framework, this objective can be expressed in terms of a reduced operator. In particular, one obtains
\begin{align}
    \mathscr{O}(\boldsymbol{\z}) = \| R - R(\boldsymbol{\z}) \|_F^2,
\end{align}
where $R$ is the reduced operator constructed from the observed data and $R(\boldsymbol{\z})$ is the one associated with the model.\\
This normalized operator comparison is motivated by ROM-based waveform inversion methods, where the inverse problem is reformulated at the level of reduced propagators or reduced operators rather than directly at the level of the measured traces \cite{borcea2020reduced, borcea2022reduced, borcea2023waveform, borcea20232waveform, borcea2024data}. Instead of directly minimizing $\mathscr{O}$, we consider the modified objective 
\begin{align}
    \min_{\boldsymbol{\z}\in\mathcal{Z}} \; \mathscr{O}^{ROM}(\boldsymbol{\z}) + \text{regularization},
\end{align}
where
\begin{align}
    \mathscr{O}^{ROM}(\boldsymbol{\z}) :=
    \left\| R(\boldsymbol{\z}) R^{-1} - \mathrm{I}_{2nm} \right\|_F^2.
\end{align}

\begin{remark}
    The objective $\mathscr{O}$ measures discrepancies between reconstructed and true internal wavefields through their reduced representation, while $\mathscr{O}^{ROM}$ compares the associated reduced operators. The latter formulation removes global scaling effects and leads to improved stability properties, as will be demonstrated in the numerical experiments.
\end{remark}

\section{Numerical protocol and stability metrics}\label{sec:StabROM}

We now describe the numerical protocol used to compare the direct data misfit and the ROM-based objective. The goal of this section is to specify the benchmark medium, the perturbation models and the evaluation metrics before presenting the numerical results in Section \ref{sec:NumExp}. Further implementation details, including the exact construction of the data vector, the ROM matrix and the perturbation grids, are given in Appendix \ref{app:numerics}.

\subsection{Benchmark configuration}

All numerical experiments are performed on a five-layer Goupillaud medium. The reference impedance vector is
\begin{align}
    \boldsymbol{\zeta}^{\star}
    =
    (1,2,4,8,10,12).
\end{align}
The exterior impedance $\zeta_0=1$ is kept fixed, and the unknown parameter is
\begin{align}
    \theta=(\zeta_1,\ldots,\zeta_5).
\end{align}
For each impedance vector $\boldsymbol{\zeta}$, the Goupillaud recurrence generates a discrete data vector
\begin{align}
    D(\boldsymbol{\zeta})
    =
    (D_0,D_1,\ldots,D_8),
\end{align}
where the odd entries vanish by the parity structure of the model. The corresponding ROM matrix is denoted by $R(\boldsymbol{\zeta})$.

\subsection{Compared inversion objectives}

We compare two inversion strategies. The first one is the direct data misfit
\begin{align}
    \mathscr O^{\mathrm{FWI}}(\boldsymbol{\zeta})
    =
    \tau
    \left\|
    D(\boldsymbol{\zeta})-D^{\mathrm{obs}}
    \right\|_2^2.
\end{align}
The second one is the normalized ROM objective
\begin{align}
    \mathscr O^{\mathrm{ROM}}(\boldsymbol{\zeta}) =
    \left\| R(\boldsymbol{\zeta})R(\boldsymbol{\zeta}_{\rm ref})^\dagger_\alpha - I_5 \right\|_F^2,
\end{align}
where, since $R$ is rectangular, we use the Tikhonov-regularized right inverse
\begin{align}
    R^\dagger_\alpha
    :=
    R^\top(RR^\top+\alpha I)^{-1}.
\end{align}
Here $\boldsymbol{\zeta}_{\rm ref}$ denotes the reference impedance used to build the ROM normalization. In the clean experiment, this is $\boldsymbol{\zeta}^{\star}$ and in the layer-perturbation experiment, this is the perturbed medium generating the data. In perturbation models that act directly on the data and may not correspond exactly to an admissible layered medium, an effective reference impedance is first fitted from the perturbed data. The precise numerical construction is described in Appendix \ref{app:numerics}. This form is a finite-dimensional analogue of the normalized ROM objectives used in \cite{borcea2023waveform, borcea20232waveform, borcea2024data}.

\subsection{Perturbation models}

We consider several structured perturbation models. These perturbations are intended to represent different sources of uncertainty in either the medium or the measurements.

First, we consider the clean inverse problem, where the observed data are generated exactly from $\boldsymbol{\zeta}^{\star}$. This experiment isolates the effect of the optimization landscape, without any additional perturbation.

Second, we perturb the layer impedances themselves. In this case, the perturbed medium is the actual ground truth for the trial, and the goal is to recover this perturbed layered structure.

Third, we consider perturbations acting directly on the data. These include multiplicative data noise, propagation-speed uncertainty, receiver-position errors and systematic acquisition errors. Propagation-speed uncertainty and receiver-position errors are modeled as structured perturbations whose effect can grow with the arrival time. Systematic errors include global gain errors, additive bias and time-origin shifts.

The purpose of these experiments is not to show that the ROM objective is uniformly better for every possible perturbation. Rather, we aim to identify the regimes in which the normalization built into the ROM formulation gives a genuine advantage, and to distinguish them from regimes where the two objectives behave similarly.

\subsection{Evaluation metrics}

For each experiment, we compare the reconstructed impedances obtained from the two objectives. We denote these reconstructions by
\begin{align}
    \widehat{\boldsymbol{\zeta}}_{\mathrm{FWI}},
    \qquad
    \widehat{\boldsymbol{\zeta}}_{\mathrm{ROM}}.
\end{align}
The reconstruction error is measured with respect to a comparison impedance $\boldsymbol{\zeta}_{\rm comp}$:
\begin{align}
    \mathrm{err}_{\mathrm{FWI}}
    &=
    \left\|
    \widehat{\boldsymbol{\zeta}}_{\mathrm{FWI}}
    -
    \boldsymbol{\zeta}_{\rm comp}
    \right\|_2,
    \\
    \mathrm{err}_{\mathrm{ROM}}
    &=
    \left\|
    \widehat{\boldsymbol{\zeta}}_{\mathrm{ROM}}
    -
    \boldsymbol{\zeta}_{\rm comp}
    \right\|_2.
\end{align}
In the clean experiment, $\boldsymbol{\zeta}_{\rm comp}=\boldsymbol{\zeta}^{\star}$. In the layer-perturbation experiment, $\boldsymbol{\zeta}_{\rm comp}$ is the perturbed medium. For data perturbations, the comparison impedance is chosen according to the procedure described in Appendix \ref{app:numerics}.

For each perturbation level, we report the median reconstruction error over the Monte Carlo trials. We also report the win fraction
\begin{align}
    \mathrm{Win}
    =
    \mathbb P
    \left(
    \mathrm{err}_{\mathrm{ROM}}
    <
    \mathrm{err}_{\mathrm{FWI}}
    \right),
\end{align}
estimated empirically over the trials. A win fraction larger than $0.5$ indicates that the ROM-based reconstruction is more accurate than the direct data-misfit reconstruction in a majority of trials.

\section{Numerical results}\label{sec:NumExp}

We now present the numerical results obtained with the protocol described in Section \ref{sec:StabROM}. We focus on the perturbation regimes that are most informative for comparing the two objectives. The remaining perturbation models are summarized at the end of the section.

\subsection{Clean inversion}

We first consider the unperturbed inverse problem. Both methods are initialized from the same family of random initial guesses and are applied to the exact synthetic data generated by $\boldsymbol{\zeta}^{\star}$. Thus, the experiment tests the optimization behavior of the two objectives in the absence of measurement or modeling errors.

In the clean setting, the median reconstruction error is
\begin{align*}
    1.88\times 10^{-3} \quad \text{for the direct data misfit}
    \qquad \text{and} \qquad
    1.34\times 10^{-4} \quad \text{for the ROM-based objective}.
\end{align*}
Thus, the ROM-based objective yields a substantially smaller median reconstruction error than the direct data misfit. Since no perturbation is present in this experiment, this improvement cannot be attributed to noise filtering. It indicates instead that, for this benchmark, the ROM objective has a more favorable numerical landscape for the reconstruction of the impedance profile.

\subsection{Layer perturbations}

We next perturb the layer impedances themselves. In each Monte Carlo trial, the perturbed medium is treated as the ground truth, and both inversion methods are asked to recover it from the corresponding data. This experiment tests robustness with respect to changes in the layered structure, rather than perturbations added only at the measurement level.

\begin{figure}[H]
    \centering
    \begin{subfigure}[t]{0.48\textwidth}
        \centering
        \includegraphics[width=\textwidth]{ 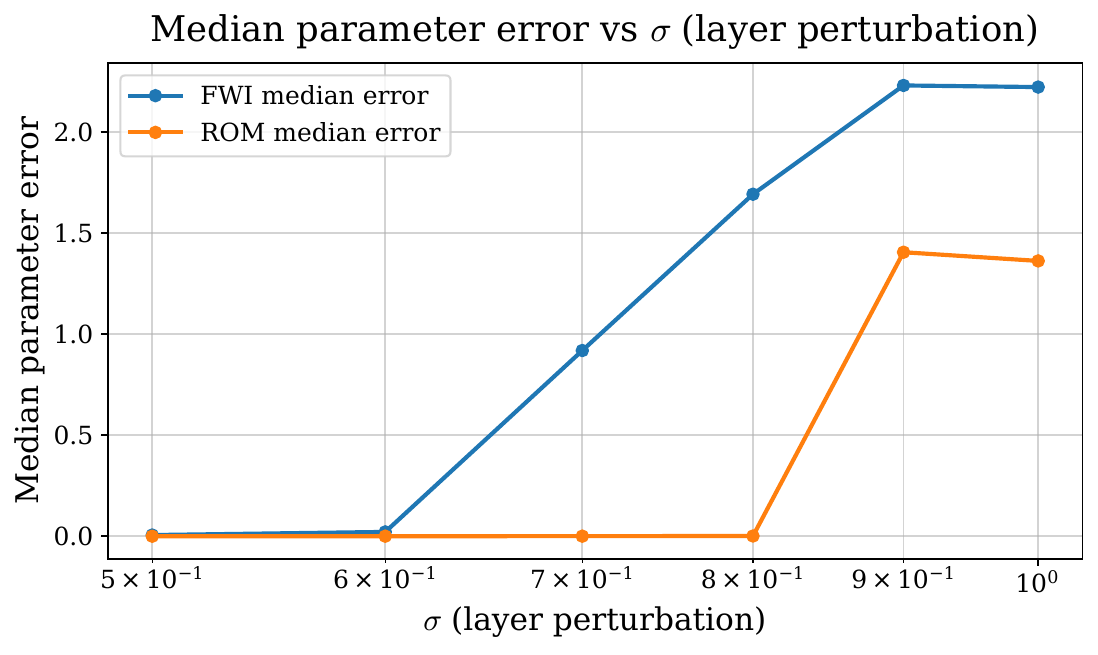}
        \caption{Median reconstruction error.}
    \end{subfigure}\hfill
    \begin{subfigure}[t]{0.48\textwidth}
        \centering
        \includegraphics[width=\textwidth]{ 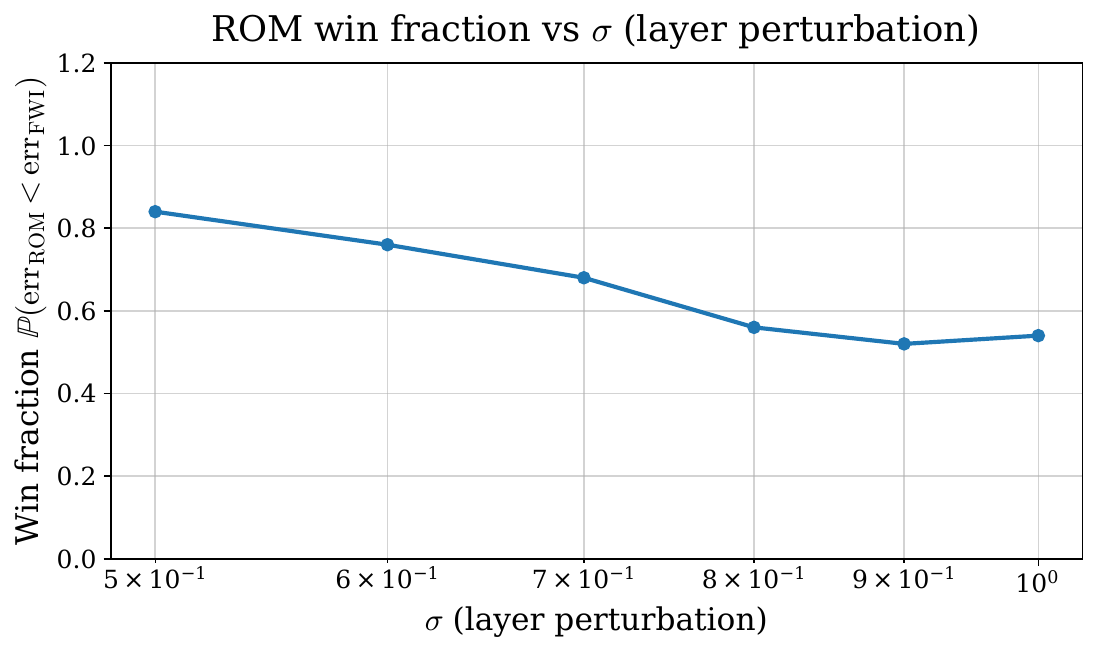}
        \caption{Win fraction $\mathbb{P}(\mathrm{err}_{\mathrm{ROM}}<\mathrm{err}_{\mathrm{FWI}})$.}
    \end{subfigure}
    \caption{Comparison of the two inversion strategies under simultaneous perturbations of all layers.}
    \label{fig:LayerPerturbation}
\end{figure}

Figure \ref{fig:LayerPerturbation}(a) shows that the ROM-based inversion has a smaller median error over the perturbation levels considered. Figure \ref{fig:LayerPerturbation}(b) supports the same conclusion: the win fraction remains above $0.5$, showing that the ROM reconstruction is more accurate in a majority of trials. This suggests that the ROM normalization is beneficial when the uncertainty acts directly on the layered medium.

\subsection{Time-origin error}

We now consider a systematic time-origin error. This perturbation shifts the temporal alignment of the measured data and therefore affects the organization of the arrivals. It is a structured perturbation rather than an independent random perturbation of each data component.

\begin{figure}[H]
    \centering
    \begin{subfigure}[t]{0.48\textwidth}
        \centering
        \includegraphics[width=\textwidth]{ 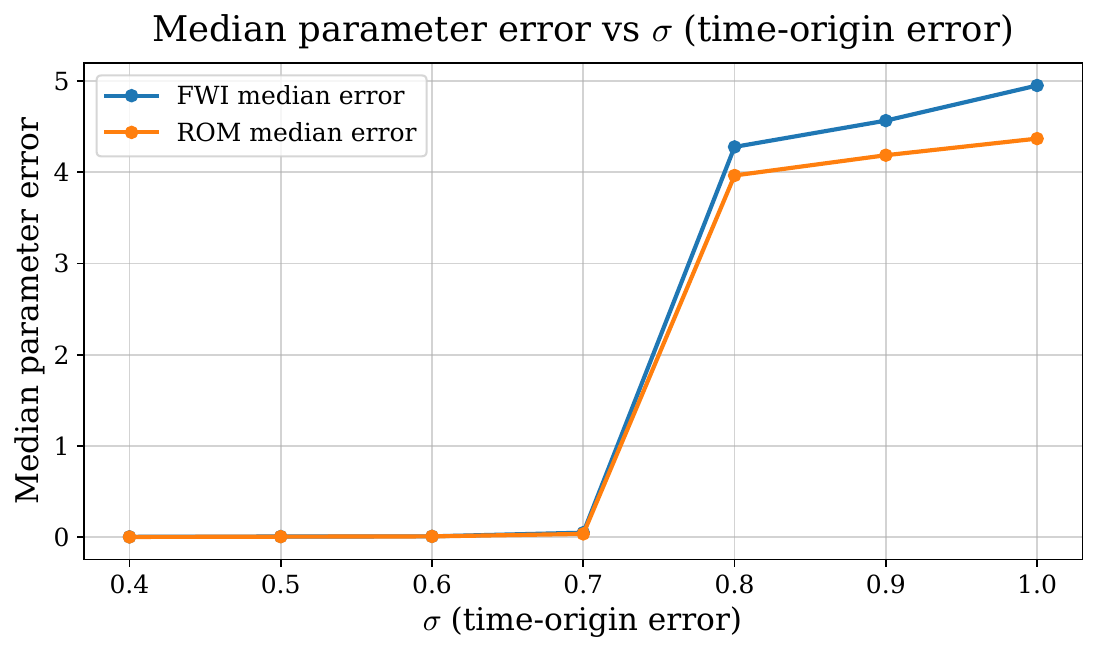}
        \caption{Median reconstruction error.}
    \end{subfigure}\hfill
    \begin{subfigure}[t]{0.48\textwidth}
        \centering
        \includegraphics[width=\textwidth]{ 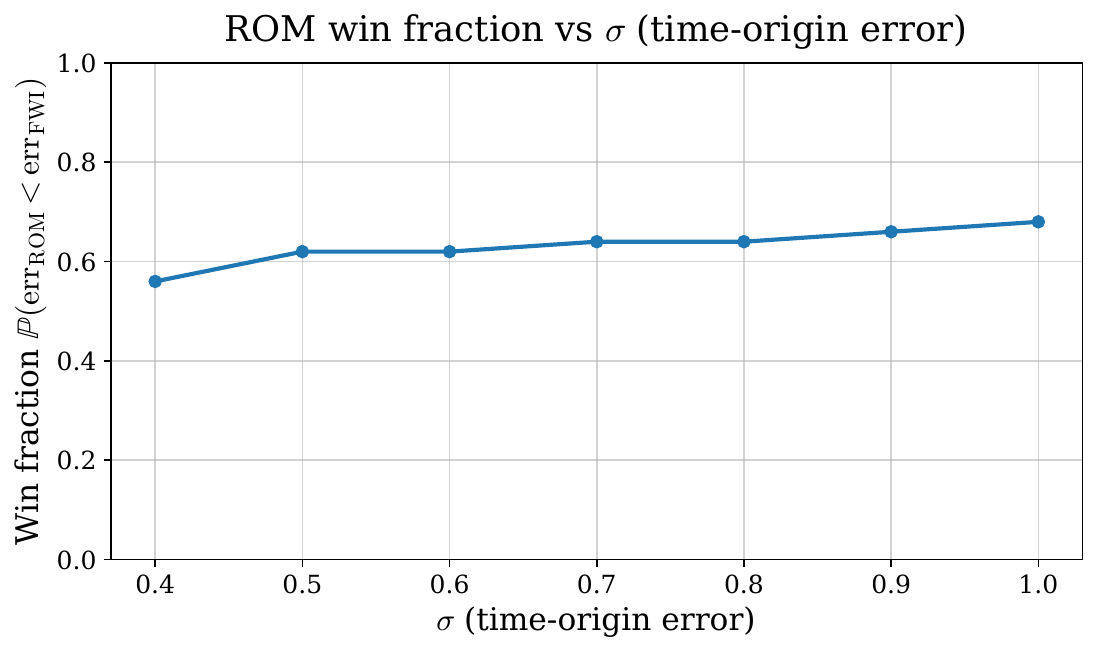}
        \caption{Win fraction $\mathbb{P}(\mathrm{err}_{\mathrm{ROM}}<\mathrm{err}_{\mathrm{FWI}})$.}
    \end{subfigure}
    \caption{Comparison of the two inversion strategies under a time-origin error.}
    \label{fig:TimeOrigin}
\end{figure}

As shown in Figure \ref{fig:TimeOrigin}, the ROM-based inversion gives smaller median errors over most of the tested range. The win fraction is also frequently well above $0.5$. This is one of the clearest regimes in which the ROM objective improves stability. The result is consistent with the fact that the ROM formulation compares normalized reduced operators, and is therefore less sensitive to certain coherent distortions of the data than the direct signal misfit.

\subsection{Receiver-position amplitude perturbation}

We also consider receiver-position errors modeled as amplitude perturbations of the measured arrivals. This type of perturbation represents errors in coupling or geometric spreading that modify the strength of the recorded signal without explicitly shifting its arrival time.

\begin{figure}[H]
    \centering
    \begin{subfigure}[t]{0.48\textwidth}
        \centering
        \includegraphics[width=\textwidth]{ 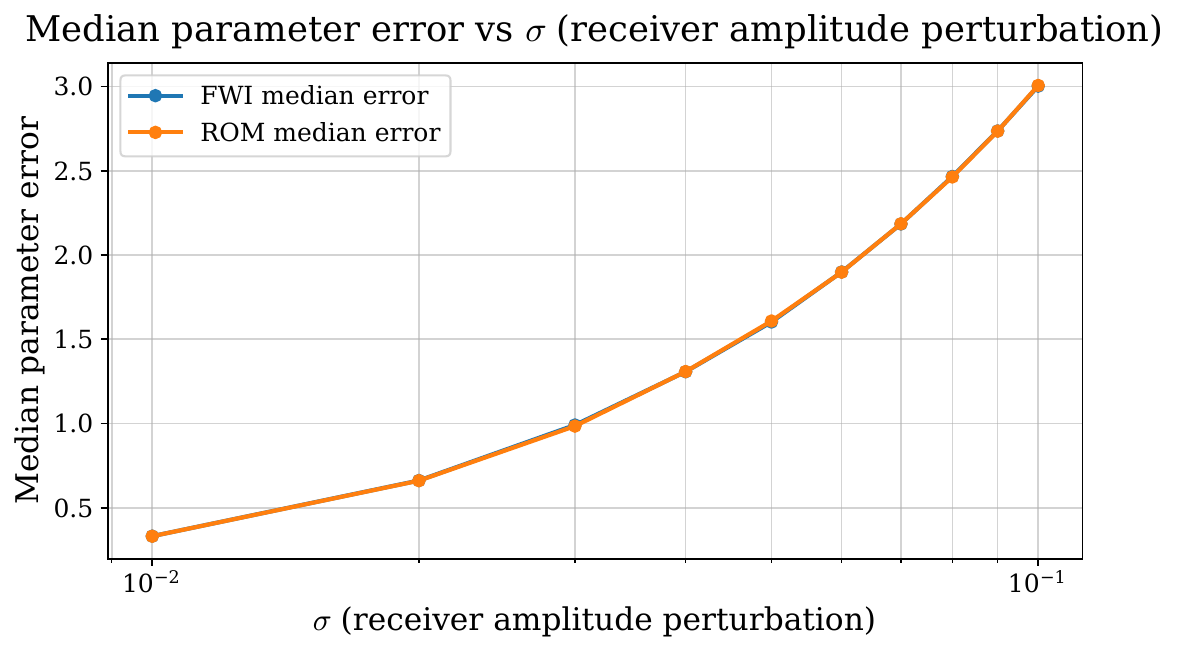}
        \caption{Median reconstruction error.}
    \end{subfigure}\hfill
    \begin{subfigure}[t]{0.48\textwidth}
        \centering
        \includegraphics[width=\textwidth]{ 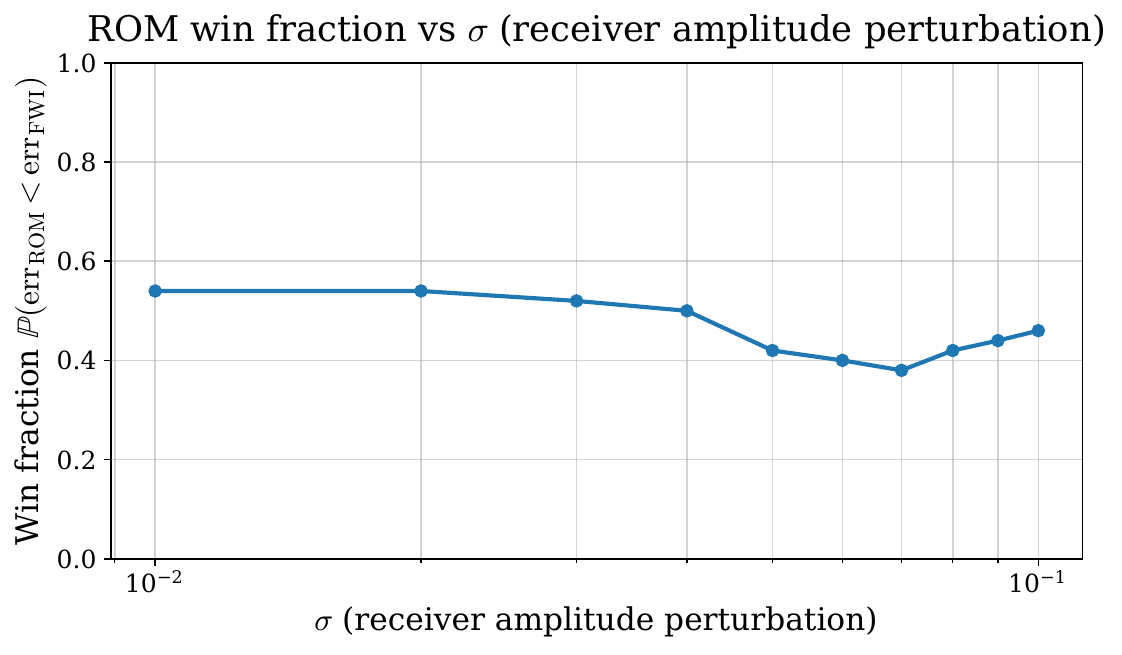}
        \caption{Win fraction $\mathbb{P}(\mathrm{err}_{\mathrm{ROM}}<\mathrm{err}_{\mathrm{FWI}})$.}
    \end{subfigure}
    \caption{Comparison of the two inversion strategies under receiver-position errors modeled as amplitude perturbations.}
    \label{fig:ReceiverAmplitude}
\end{figure}

In Figure \ref{fig:ReceiverAmplitude}, the two methods are closer than in the clean and time-origin experiments. The ROM-based inversion still shows a mild advantage for small to moderate perturbations, visible both in the median errors and in the win fraction, but the improvement is less pronounced. This provides an intermediate regime: the ROM objective remains competitive and slightly preferable, but the perturbation does not produce as clear a separation between the two methods.

\subsection{Perturbations with comparable behavior}

We also tested additional structured perturbations, including multiplicative data noise, propagation-speed uncertainty, receiver-position errors modeled as time shifts, constant gain bias and additive bias. In these cases, the two inversion procedures behave comparably. The median errors remain close, and the win fraction typically fluctuates around $0.5$, without revealing a persistent dominance of either method.

These experiments are important because they show that the ROM objective is not uniformly superior under all perturbations. Its advantage appears most clearly in the clean inversion, in layer perturbations and in coherent temporal distortions such as the time-origin error. For other perturbation mechanisms, the normalization in the ROM objective does not significantly change the reconstruction accuracy, but it also does not lead to a systematic degradation.

\subsection{Summary of numerical observations}

The numerical experiments support a balanced conclusion. In the clean deterministic inverse problem, the ROM-based objective produces a more favorable reconstruction than the direct data misfit. This advantage persists when the layered medium itself is perturbed, and it is especially visible under time-origin errors, where the perturbation has a coherent temporal structure.

At the same time, the ROM formulation does not dominate the direct data misfit in every perturbation regime. For several structured data perturbations, the two methods perform similarly. Thus, the main conclusion is not that ROM-based inversion is universally better, but that it can provide a clear advantage in regimes where the reduced operator normalization captures the relevant structure of the wave propagation more effectively than a direct comparison of the measured signals.

\section{Conclusion}\label{sec:Conclusion}

In this work, we investigated reduced-order modeling for wave-based inverse problems in layered media within the framework of Goupillaud media. This setting provides a natural discrete representation of wave propagation and allows for a direct comparison between classical data misfit and a ROM-based objective defined at the level of reduced operators. In this sense, the numerical study complements the ROM-based inversion program developed in \cite{borcea2020reduced, borcea2022reduced, borcea2023waveform, borcea20232waveform, borcea2024data}, by testing the behavior of a normalized ROM objective in an exactly discretizable layered benchmark.

The numerical experiments highlight a nuanced but consistent picture. In the clean inversion setting, the ROM-based formulation yields a significantly improved reconstruction, indicating a more favorable optimization landscape. This advantage persists in certain structured perturbation regimes, such as perturbations of the layered medium and time-origin errors, where the ROM-based inversion achieves smaller median errors and higher win fractions. These results suggest that the normalization inherent in the ROM formulation is particularly effective in handling global or structurally coherent distortions of the data. At the same time, the ROM-based approach does not uniformly outperform the classical misfit. For several other perturbations, including additive noise and propagation-related distortions, the two methods exhibit comparable performance, with no systematic dominance of either approach. This indicates that the benefit of the ROM formulation depends on the nature of the perturbation and how it interacts with the underlying wave dynamics.

Overall, the results show that reduced-order modeling provides a meaningful and, in some regimes, advantageous alternative to standard inversion approaches. Its strength lies in capturing structural features of the problem that are not directly emphasized by the data misfit, while remaining competitive across a broad range of perturbations.

\appendix

\section{Proofs of Section \ref{sec:Problem Formulation}}

\subsection{Proof of Lemma \ref{lem:hyp prob}} \label{app:sec1:hyp prob}

From \eqref{eq:Maxwell}, we have
\begin{align}
    \frac{\partial \mathbf B}{\partial t}
    =
    -\nabla\times\mathbf E-\mathbf J_m
\end{align}
and
\begin{align}
    \frac{\partial \mathbf E}{\partial t}
    =
    \boldsymbol{\ve}^{-1}
    \nabla\times\bigl(\mu^{-1}\mathbf B\bigr)
    -
    \boldsymbol{\ve}^{-1}\mathbf J_e.
\end{align}
Thus, for
\begin{align*}
    \mathbf U=
    \begin{pmatrix}
        \mathbf E\\
        \mathbf B
    \end{pmatrix},
\end{align*}
we obtain
\begin{align}
    \frac{\partial}{\partial t}
    \begin{pmatrix}
        \mathbf E\\
        \mathbf B
    \end{pmatrix}
    =
    \begin{pmatrix}
        0 & \boldsymbol{\ve}^{-1}\nabla\times\mu^{-1}\\
        -\nabla\times & 0
    \end{pmatrix}
    \begin{pmatrix}
        \mathbf E\\
        \mathbf B
    \end{pmatrix}
    -
    \begin{pmatrix}
        \boldsymbol{\ve}^{-1}\mathbf J_e\\
        \mathbf J_m
    \end{pmatrix}.
\end{align}
Equivalently,
\begin{align}
    \left(\mathbf A+\frac{\partial}{\partial t}\right)\mathbf U
    =
    -
    \begin{pmatrix}
        \boldsymbol{\ve}^{-1}\mathbf J_e\\
        \mathbf J_m
    \end{pmatrix},
\end{align}
where
\begin{align}
    \mathbf A
    =
    \begin{pmatrix}
        0 & -\boldsymbol{\ve}^{-1}\nabla\times\mu^{-1}\\
        \nabla\times & 0
    \end{pmatrix}.
\end{align}
This proves the result.

\subsection{Proof of Lemma \ref{lemma:hyp prob:2}}\label{app:sec1:hyp prob:2}

From \eqref{def: E tilde, B tilde}, we have
\begin{align}
    \mathbf E
    =
    \boldsymbol{\ve}^{-\frac12}\tilde{\mathbf E},
    \qquad
    \mathbf B
    =
    \mu^{\frac12}\tilde{\mathbf B}.
\end{align}
Since $\boldsymbol{\ve}$ and $\mu$ are independent of time,
\begin{align}
    \frac{\partial \tilde{\mathbf E}}{\partial t}
    &=
    \boldsymbol{\ve}^{\frac12}
    \frac{\partial \mathbf E}{\partial t}
    \\
    &=
    \boldsymbol{\ve}^{-\frac12}
    \nabla\times
    \bigl(\mu^{-1}\mathbf B\bigr)
    -
    \boldsymbol{\ve}^{-\frac12}\mathbf J_e
    \\
    &=
    \boldsymbol{\ve}^{-\frac12}
    \nabla\times
    \bigl(\mu^{-\frac12}\tilde{\mathbf B}\bigr)
    -
    \boldsymbol{\ve}^{-\frac12}\mathbf J_e.
\end{align}
Similarly,
\begin{align}
    \frac{\partial \tilde{\mathbf B}}{\partial t}
    &=
    \mu^{-\frac12}
    \frac{\partial \mathbf B}{\partial t}
    \\
    &=
    -
    \mu^{-\frac12}
    \nabla\times
    \bigl(\boldsymbol{\ve}^{-\frac12}\tilde{\mathbf E}\bigr)
    -
    \mu^{-\frac12}\mathbf J_m.
\end{align}
Therefore,
\begin{align}
    \frac{\partial}{\partial t}
    \begin{pmatrix}
        \tilde{\mathbf E}\\
        \tilde{\mathbf B}
    \end{pmatrix}
    =
    \begin{pmatrix}
        0 & \boldsymbol{\ve}^{-\frac12}\nabla\times\mu^{-\frac12}\\
        -\mu^{-\frac12}\nabla\times\boldsymbol{\ve}^{-\frac12} & 0
    \end{pmatrix}
    \begin{pmatrix}
        \tilde{\mathbf E}\\
        \tilde{\mathbf B}
    \end{pmatrix}
    -
    \begin{pmatrix}
        \boldsymbol{\ve}^{-\frac12}\mathbf J_e\\
        \mu^{-\frac12}\mathbf J_m
    \end{pmatrix}.
\end{align}
Thus
\begin{align}
    \left(\tilde{\mathbf A}+\frac{\partial}{\partial t}\right)\tilde{\mathbf U}
    =
    -
    \begin{pmatrix}
        \boldsymbol{\ve}^{-\frac12}\mathbf J_e\\
        \mu^{-\frac12}\mathbf J_m
    \end{pmatrix},
\end{align}
with
\begin{align}
    \tilde{\mathbf A}
    =
    \begin{pmatrix}
        0 & -\boldsymbol{\ve}^{-\frac12}\nabla\times\mu^{-\frac12}\\
        \mu^{-\frac12}\nabla\times\boldsymbol{\ve}^{-\frac12} & 0
    \end{pmatrix}.
\end{align}
This proves the normalized formulation.

\subsection{Proof of Lemma \ref{lem: tilde A skew sym}}\label{app:sec1:tilde A skew sym}

Let
\begin{align*}
    \mathbf u=
    \begin{pmatrix}
        \mathbf u_1\\
        \mathbf u_2
    \end{pmatrix},
    \qquad
    \mathbf v=
    \begin{pmatrix}
        \mathbf v_1\\
        \mathbf v_2
    \end{pmatrix}
\end{align*}
belong to the domain of $\tilde{\mathbf A}$. The boundary condition \eqref{boundary conditions physical} implies, in the normalized variables, that
\begin{align}\label{boundary conditions normalized}
    \mathbf n\times
    \bigl(\boldsymbol{\ve}^{-\frac12}\mathbf u_1\bigr)
    =
    0,
    \qquad
    \mathbf n\times
    \bigl(\boldsymbol{\ve}^{-\frac12}\mathbf v_1\bigr)
    =
    0,
    \qquad x\in\partial\Omega.
\end{align}
Indeed, if \(\mathbf u_1=\widetilde{\mathbf E}\), then
\begin{align*}
    \boldsymbol{\ve}^{-\frac12}\mathbf u_1=\mathbf E.
\end{align*}
Thus \eqref{boundary conditions normalized} is the normalized form of the perfectly conducting condition
\begin{align*}
    \mathbf n\times \mathbf E=0.
\end{align*}
Then,
\begin{align}
    \langle \tilde{\mathbf A}\mathbf u,\mathbf v\rangle
    &=
    -\int_{\W}
    \left[
    \boldsymbol{\ve}^{-\frac12}
    \nabla\times
    \bigl(\mu^{-\frac12}\mathbf u_2\bigr)
    \right]\cdot \mathbf v_1
    \upd x
    +
    \int_{\W}
    \left[
    \mu^{-\frac12}
    \nabla\times
    \bigl(\boldsymbol{\ve}^{-\frac12}\mathbf u_1\bigr)
    \right]\cdot \mathbf v_2
    \upd x.
\end{align}
Equivalently,
\begin{align}
    \langle \tilde{\mathbf A}\mathbf u,\mathbf v\rangle
    &=
    -\int_{\W}
    \nabla\times
    \bigl(\mu^{-\frac12}\mathbf u_2\bigr)
    \cdot
    \boldsymbol{\ve}^{-\frac12}\mathbf v_1
    \upd x
    +
    \int_{\W}
    \nabla\times
    \bigl(\boldsymbol{\ve}^{-\frac12}\mathbf u_1\bigr)
    \cdot
    \mu^{-\frac12}\mathbf v_2
    \upd x.
\end{align}
Using the curl integration-by-parts formula
\begin{align}
    \int_\Omega (\nabla\times a)\cdot b\,\upd x
    =
    \int_\Omega a\cdot(\nabla\times b)\,\upd x
    +
    \int_{\partial\Omega}(\mathbf n\times a)\cdot b\,\upd S,
\end{align}
the boundary terms vanish because of the tangential boundary conditions above. Hence
\begin{align}
    \langle \tilde{\mathbf A}\mathbf u,\mathbf v\rangle
    &=
    -\int_{\W}
    \mu^{-\frac12}\mathbf u_2
    \cdot
    \nabla\times
    \bigl(\boldsymbol{\ve}^{-\frac12}\mathbf v_1\bigr)
    \upd x
    +
    \int_{\W}
    \boldsymbol{\ve}^{-\frac12}\mathbf u_1
    \cdot
    \nabla\times
    \bigl(\mu^{-\frac12}\mathbf v_2\bigr)
    \upd x.
\end{align}
Therefore,
\begin{align}
    \langle \tilde{\mathbf A}\mathbf u,\mathbf v\rangle
    &=
    -\int_{\W}
    \mathbf u_2
    \cdot
    \left[
    \mu^{-\frac12}
    \nabla\times
    \bigl(\boldsymbol{\ve}^{-\frac12}\mathbf v_1\bigr)
    \right]
    \upd x
    +
    \int_{\W}
    \mathbf u_1
    \cdot
    \left[
    \boldsymbol{\ve}^{-\frac12}
    \nabla\times
    \bigl(\mu^{-\frac12}\mathbf v_2\bigr)
    \right]
    \upd x.
\end{align}
By the definition of $\tilde{\mathbf A}$, this is
\begin{align}
    \langle \tilde{\mathbf A}\mathbf u,\mathbf v\rangle
    =
    -
    \langle \mathbf u,\tilde{\mathbf A}\mathbf v\rangle.
\end{align}
Thus $\tilde{\mathbf A}$ is skew-symmetric. The skew-adjointness of the corresponding perfectly conducting Maxwell realization is a standard result; see, for instance, \cite{monk2003finite}.

\subsection{Proof of Theorem \ref{thm:Helmholtz dec}}\label{app:sec1:thm:Helmholtz dec}

Let
\begin{align*}
    \psi=
    \begin{pmatrix}
        \psi_1\\
        \psi_2
    \end{pmatrix}
    \in
    \mathrm{null}(\tilde{\mathbf A}).
\end{align*}
Then $\tilde{\mathbf A}\psi=0$ implies
\begin{align}
    \nabla\times
    \bigl(\mu^{-\frac12}\psi_2\bigr)
    =
    0,
    \qquad
    \nabla\times
    \bigl(\boldsymbol{\ve}^{-\frac12}\psi_1\bigr)
    =
    0.
\end{align}
On a simply connected domain, there exist scalar potentials $N_1$ and $N_2$ such that
\begin{align}
    \boldsymbol{\ve}^{-\frac12}\psi_1
    =
    \nabla N_1,
    \qquad
    \mu^{-\frac12}\psi_2
    =
    \nabla N_2.
\end{align}
Therefore
\begin{align}
    \psi_1
    =
    \boldsymbol{\ve}^{\frac12}\nabla N_1,
    \qquad
    \psi_2
    =
    \mu^{\frac12}\nabla N_2.
\end{align}
This gives
\begin{align}
    \psi
    =
    \widetilde{\nabla}
    \begin{pmatrix}
        N_1\\
        N_2
    \end{pmatrix},
\end{align}
where
\begin{align}
    \widetilde{\nabla}
    \begin{pmatrix}
        N_1\\
        N_2
    \end{pmatrix}
    =
    \begin{pmatrix}
        \boldsymbol{\ve}^{\frac12}\nabla N_1\\
        \mu^{\frac12}\nabla N_2
    \end{pmatrix}.
\end{align}
Using the orthogonal decomposition
\begin{align*}
    L^2(\Omega)^6
    =
    \mathrm{null}(\tilde{\mathbf A})
    \oplus
    \overline{\mathrm{range}(\tilde{\mathbf A})},
\end{align*}
which follows from the general identity
\begin{align*}
    \mathrm{null}(A)=\mathrm{range}(A)^\perp
\end{align*}
for skew-adjoint operators, see for instance \cite{monk2003finite}, the solution decomposes as
\begin{align}
    \tilde{\mathbf{U}}^{(s,p)}(t,x)
    =
    \widetilde{\nabla}\mathbf N^{(s,p)}(t,x)
    +
    \mathbb U^{(s,p)}(t,x).
\end{align}
This proves the decomposition.

\subsection{Claim in spectral decomposition}\label{app:sec2:claim}

We justify the construction of the family $(\boldsymbol{\psi}_j)_{j\geq1}$ associated with the orthonormal eigenbasis $(\boldsymbol{\phi}_j)_{j\geq1}$, and prove the identities
\begin{align}\label{app:eq:A eigs}
    \mathbb{A} \boldsymbol{\phi}_j = \sqrt{\lambda_j}\,\boldsymbol{\psi}_j,
    \qquad
    \mathbb{A} \boldsymbol{\psi}_j = -\sqrt{\lambda_j}\,\boldsymbol{\phi}_j.
\end{align}
Let $\mathcal H_r=\mathrm{null}(\tilde{\mathbf A})^\perp$ be the propagating subspace and let
\begin{align}
    \mathbb A
    =
    \tilde{\mathbf A}\big|_{\mathcal D(\tilde{\mathbf A})\cap\mathcal H_r}.
\end{align}
We work with the standard skew-adjoint Maxwell realization on $\mathcal H_r$. Hence
$-\mathbb A^2$ is positive self-adjoint. Assuming the corresponding Maxwell compactness property, $-\mathbb A^2$ has compact resolvent on $\mathcal H_r$. Therefore there exists an orthonormal eigenbasis $(\boldsymbol\phi_j)_{j\ge1}$ of $\mathcal H_r$ and positive eigenvalues $(\lambda_j)_{j\ge1}$, with $\lambda_j\to\infty$, such that
\begin{align}
    -\mathbb A^2\boldsymbol\phi_j
    =
    \lambda_j\boldsymbol\phi_j.
\end{align}
We do not assume that the eigenvalues $(\lambda_j)_{j\geq1}$ are simple. In general, they may have multiplicity. In that case, the construction above is performed within each eigenspace and an orthonormal basis of eigenfunctions is chosen.

Now, for each $j \geq 1$, we define
\begin{align}
    \boldsymbol{\psi}_j := \frac{\mathbb{A}\boldsymbol{\phi}_j}{\sqrt{\lambda_j}}.
\end{align}
This is well-defined since $\lambda_j > 0$ and $\boldsymbol{\phi}_j \in \mathcal{D}(\mathbb{A})$. We will first show that $(\boldsymbol{\psi}_j)_j$ is an orthonormal family. Using the skew-symmetry of $\mathbb{A}$, we compute
\begin{align*}
    \|\boldsymbol{\psi}_j\|^2 &= \frac{1}{\lambda_j} \langle \mathbb{A}\boldsymbol{\phi}_j, \mathbb{A}\boldsymbol{\phi}_j \rangle = -\frac{1}{\lambda_j} \langle \boldsymbol{\phi}_j, \mathbb{A}^2 \boldsymbol{\phi}_j \rangle = \frac{1}{\lambda_j} \langle \boldsymbol{\phi}_j, -\mathbb{A}^2 \boldsymbol{\phi}_j \rangle \\
    &= \frac{1}{\lambda_j} \lambda_j \langle \boldsymbol{\phi}_j, \boldsymbol{\phi}_j \rangle = 1.
\end{align*}
Thus, $\|\boldsymbol{\psi}_j\| = 1$. Then, let $j,\ell \geq 1$. It holds that
\begin{align*}
    \langle \boldsymbol{\psi}_j, \boldsymbol{\psi}_\ell \rangle &= \frac{1}{\sqrt{\lambda_j \lambda_\ell}} \langle \mathbb{A}\boldsymbol{\phi}_j, \mathbb{A}\boldsymbol{\phi}_\ell \rangle = -\frac{1}{\sqrt{\lambda_j \lambda_\ell}} \langle \boldsymbol{\phi}_j, \mathbb{A}^2 \boldsymbol{\phi}_\ell \rangle = \frac{1}{\sqrt{\lambda_j \lambda_\ell}} \langle \boldsymbol{\phi}_j, -\mathbb{A}^2 \boldsymbol{\phi}_\ell \rangle \\
    &= \frac{\lambda_\ell}{\sqrt{\lambda_j \lambda_\ell}} \langle \boldsymbol{\phi}_j, \boldsymbol{\phi}_\ell \rangle = \sqrt{\frac{\lambda_\ell}{\lambda_j}}\,\delta_{j\ell}.
\end{align*}
Hence,
\begin{align}
    \langle \boldsymbol{\psi}_j, \boldsymbol{\psi}_\ell \rangle = \delta_{j\ell},
\end{align}
which gives the orthogonality.

Let us also show the orthogonality between $\boldsymbol{\psi}_j$ and $\boldsymbol{\phi}_j$, for $j\geq1$. 
Using skew-symmetry,
\begin{align}
    \langle \boldsymbol{\phi}_j, \boldsymbol{\psi}_j \rangle = \frac{1}{\sqrt{\lambda_j}} \langle \boldsymbol{\phi}_j, \mathbb{A}\boldsymbol{\phi}_j \rangle
    = -\frac{1}{\sqrt{\lambda_j}} \langle \mathbb{A}\boldsymbol{\phi}_j, \boldsymbol{\phi}_j \rangle
    = - \langle \boldsymbol{\phi}_j, \boldsymbol{\psi}_j \rangle,
\end{align}
which implies
\begin{align}
    \langle \boldsymbol{\phi}_j, \boldsymbol{\psi}_j \rangle = 0.
\end{align}
In general, the functions $\boldsymbol{\psi}_j$ belong to the span of the family $(\boldsymbol{\phi}_k)_k$, so orthogonality between $\phi_j$ and $\psi_\ell$ does not hold for $\ell \neq j$, with $j,\ell\geq1$.

Finally, let us prove \eqref{app:eq:A eigs}. By definition,
\begin{align}
    \mathbb{A} \boldsymbol{\phi}_j = \sqrt{\lambda_j}\,\boldsymbol{\psi}_j.
\end{align}
Furthermore,
\begin{align}
    \mathbb{A} \boldsymbol{\psi}_j = \frac{1}{\sqrt{\lambda_j}} \mathbb{A}^2 \boldsymbol{\phi}_j = -\frac{\lambda_j}{\sqrt{\lambda_j}} \boldsymbol{\phi}_j = -\sqrt{\lambda_j}\,\boldsymbol{\phi}_j.
\end{align}
This concludes the proof.

\subsection{Proof of Lemma \ref{lemma:M and S properties}} \label{app:sec1:lemma:M and S properties}

\begin{proof}
    For $0\leq j \leq l \leq n-1$, it holds from \eqref{def: mass matrix} that
    \begin{align*}
        \begin{aligned}
            \mathbb{M}_{j,l} &= \int_{\W} v^{\top}_j(x) v_l(x) \upd x = \int_{\W} v_0^{\top}(x) e^{j\t\mathbb{A}} e^{-l\t\mathbb{A}} v_0(x) \upd x \\
            &= \int_{\W} v_0^{\top}(x) e^{-(l-j)\t\mathbb{A}} v_0(x) \upd x,
        \end{aligned}
    \end{align*}
    which, from \eqref{eq: D}, becomes
    \begin{align*}
        \mathbb{M}_{j,l} = \mathbb{D}(t_{l-j}).
    \end{align*}
    In addition, we have from \eqref{M=R^T R}, that the mass matrix $\mathbb{M}$ is symmetric, which implies that for $0\leq l \leq j \leq n-1$, we have
    \begin{align}
        \mathbb{M}_{j,l} = \Big(\mathbb{M}_{l,j}\Big)^{\top}.
    \end{align}
    Similarly, for $0\leq j \leq l \leq n-1$, we have from \eqref{def: stiffness matrix} and \eqref{def: v_j+1} that
    \begin{align*}
        \mathbb{S}_{j,l} &= \int_{\W} v^{\top}_j(x) \mathcal{P} v_l(x) \upd x = \int_{\W} v^{\top}_j(x) v_{l+1}(x) \upd x = \int_{\W} v_0^{\top}(x) e^{j\t\mathbb{A}} e^{-(l+1)\t\mathbb{A}} v_0(x) \upd x \\
        &= \int_{\W}  v_0^{\top}(x) e^{-(l+1-j)\t\mathbb{A}} v_0(x) \upd x,
    \end{align*}
    which gives
    \begin{align*}
        \mathbb{S}_{j,l} = \mathbb{D}(t_{l+1-j}).
    \end{align*}
    Then, for $0\leq l < j-1$, with $j=2,\dots,n-1$, we have
    \begin{align*}
        \mathbb{S}_{j,l} &= \int_{\W} v^{\top}_j(x) \mathcal{P} v_l(x) \upd x = \int_{\W} \Big[\mathcal{P}^{\top } v_j(x)\Big]^{\top} v_l(x) \upd x = \int_{\W} v_{j-1}^{\top}(x) v_l(x) \upd x \\
    &= \Big[\int_{\W} v_l^{\top}(x) v_{j-1}(x) \upd x \Big]^{\top},
    \end{align*}
    which gives
    \begin{align}
        \mathbb{S}_{j,l} = \Big[\mathbb{D}(t_{j-1-l})\Big]^{\top}.
    \end{align}
    This concludes the proof.
\end{proof}

\section{Proofs of Section \ref{sec:Scalar}}

\subsection{Proof of Theorem \ref{thm:scalar}}\label{app:sec2:thm:scalar}

\begin{proof}
    We compute the curl operators explicitly under the assumptions \eqref{assumpt 1}-\eqref{assumpt 2}. Since
    \begin{align*}
        \mathbf E(t,x)
        =
        \begin{pmatrix}
            0\\
            E_2(t,x_1)\\
            0
        \end{pmatrix},
    \end{align*}
    we obtain
    \begin{align}
        \nabla\times\mathbf E
        =
        \begin{pmatrix}
            0\\
            0\\
            \frac{\partial E_2}{\partial x_1}
        \end{pmatrix}.
    \end{align}
    Similarly, since
    \begin{align*}
        \mathbf B(t,x)
        =
        \begin{pmatrix}
            0\\
            0\\
            B_3(t,x_1)
        \end{pmatrix},
    \end{align*}
    and $\mu=\mu(x_1)$, we have
    \begin{align*}
        \mu^{-1}\mathbf B
        =
        \begin{pmatrix}
            0\\
            0\\
            \mu(x_1)^{-1}B_3(t,x_1)
        \end{pmatrix}.
    \end{align*}
    Therefore
    \begin{align}
        \nabla\times(\mu^{-1}\mathbf B)
        =
        \begin{pmatrix}
            0\\
            -
            \frac{\partial}{\partial x_1}
            \left(
            \mu(x_1)^{-1}B_3
            \right)\\
            0
        \end{pmatrix}.
    \end{align}

    Substituting into the source-free Maxwell equations
    \begin{align}
        \nabla\times\mathbf E+\frac{\partial\mathbf B}{\partial t}=0,
        \qquad
        -\nabla\times(\mu^{-1}\mathbf B)
        +
        \varepsilon(x_1)\frac{\partial\mathbf E}{\partial t}=0,
    \end{align}
    we obtain
    \begin{align}
        \left\{
        \begin{aligned}
            \frac{\partial B_3}{\partial t}
            &=
            -
            \frac{\partial E_2}{\partial x_1},
            \\
            \varepsilon(x_1)
            \frac{\partial E_2}{\partial t}
            &=
            -
            \frac{\partial}{\partial x_1}
            \left(
            \mu(x_1)^{-1}B_3
            \right).
        \end{aligned}
        \right.
    \end{align}
    This proves \eqref{eq:scalar_first_order_maxwell}.

    We now derive the second-order equation for $E_2$. Differentiating the second equation with respect to time gives
    \begin{align}
        \varepsilon(x_1)
        \frac{\partial^2 E_2}{\partial t^2}
        =
        -
        \frac{\partial}{\partial x_1}
        \left(
        \mu(x_1)^{-1}
        \frac{\partial B_3}{\partial t}
        \right).
    \end{align}
    Using
    \begin{align*}
        \frac{\partial B_3}{\partial t}
        =
        -
        \frac{\partial E_2}{\partial x_1},
    \end{align*}
    we obtain
    \begin{align}
        \varepsilon(x_1)
        \frac{\partial^2 E_2}{\partial t^2}
        =
        \frac{\partial}{\partial x_1}
        \left(
        \mu(x_1)^{-1}
        \frac{\partial E_2}{\partial x_1}
        \right).
    \end{align}
    Hence
    \begin{align}
        \varepsilon(x_1)
        \frac{\partial^2 E_2}{\partial t^2}
        -
        \frac{\partial}{\partial x_1}
        \left(
        \mu(x_1)^{-1}
        \frac{\partial E_2}{\partial x_1}
        \right)
        =
        0.
    \end{align}

    We also derive the equation for $B_3$. Differentiating the first equation with respect to time gives
    \begin{align}
        \frac{\partial^2 B_3}{\partial t^2}
        =
        -
        \frac{\partial}{\partial x_1}
        \left(
        \frac{\partial E_2}{\partial t}
        \right).
    \end{align}
    From the second equation,
    \begin{align}
        \frac{\partial E_2}{\partial t}
        =
        -
        \varepsilon(x_1)^{-1}
        \frac{\partial}{\partial x_1}
        \left(
        \mu(x_1)^{-1}B_3
        \right).
    \end{align}
    Therefore
    \begin{align}
        \frac{\partial^2 B_3}{\partial t^2}
        =
        \frac{\partial}{\partial x_1}
        \left[
        \varepsilon(x_1)^{-1}
        \frac{\partial}{\partial x_1}
        \left(
        \mu(x_1)^{-1}B_3
        \right)
        \right].
    \end{align}

    Finally, if $\varepsilon$ and $\mu$ are constant inside a layer, the equation for $E_2$ becomes
    \begin{align}
        \frac{\partial^2 E_2}{\partial t^2}
        -
        \frac{1}{\varepsilon\mu}
        \frac{\partial^2 E_2}{\partial x_1^2}
        =
        0.
    \end{align}
    Thus the local wave speed is
    \begin{align}
        c(x_1)
        =
        \frac{1}{\sqrt{\varepsilon(x_1)\mu(x_1)}}.
    \end{align}
    This concludes the proof.
\end{proof}

\section{Details of the numerical implementation}\label{app:numerics}

This appendix describes the numerical procedure used in Section \ref{sec:NumExp}. The purpose is to make the construction of the data, the ROM matrices and the perturbation experiments reproducible.

\subsection{Reference medium and unknown parameters}

All experiments are performed on a five-layer Goupillaud medium. The reference impedance vector is
\begin{align}
    \boldsymbol{\zeta}^{\star}
    =
    (1,2,4,8,10,12).
\end{align}
The first impedance $\zeta_0=1$ is kept fixed throughout the inversion, in order to remove the global scaling ambiguity. The unknown parameter vector is therefore
\begin{align}
    \theta = (\zeta_1,\ldots,\zeta_5).
\end{align}
The admissible set used in the numerical optimization is
\begin{align}
    10^{-6} \leq \zeta_j \leq 50,
    \qquad j=1,\ldots,5.
\end{align}
The time step is normalized to
\begin{align}
    \tau=1.
\end{align}
We use nine snapshots, indexed by $k=0,\ldots,8$.

\subsection{Forward data}

For each impedance vector $\boldsymbol{\zeta}$, the reflection and transmission coefficients at the interfaces are
\begin{align}
    r_j(\boldsymbol{\zeta})
    =
    \frac{\zeta_j-\zeta_{j+1}}{\zeta_j+\zeta_{j+1}},
    \qquad
    t_j(\boldsymbol{\zeta})
    =
    \frac{2\sqrt{\zeta_j\zeta_{j+1}}}{\zeta_j+\zeta_{j+1}},
    \qquad j=0,\ldots,4.
\end{align}
The right- and left-going amplitudes are then propagated by the Goupillaud recurrence described in Section \ref{sec:Goup}. This gives the snapshot coefficients
\begin{align}
    v_k(\boldsymbol{\zeta})
    =
    \sum_j C_{k,j}(\boldsymbol{\zeta})\delta_{L_j},
    \qquad k=0,\ldots,8.
\end{align}
The discrete data vector is computed from the snapshot inner products
\begin{align}
    \mathbb{D}_m(\boldsymbol{\zeta})
    =
    \langle v_0(\boldsymbol{\zeta}),v_m(\boldsymbol{\zeta})\rangle,
    \qquad m=0,\ldots,8.
\end{align}
By the parity structure of the Goupillaud model, the odd entries vanish:
\begin{align}
    \mathbb{D}_1=\mathbb{D}_3=\mathbb{D}_5=\mathbb{D}_7=0.
\end{align}
Thus, the data used in the experiments are
\begin{align}
    \mathbb{D}(\boldsymbol{\zeta})
    =
    (\mathbb{D}_0,0,\mathbb{D}_2,0,\mathbb{D}_4,0,\mathbb{D}_6,0,\mathbb{D}_8).
\end{align}

\subsection{ROM matrix and normalized objective}

The ROM matrix is assembled from the nonzero Goupillaud amplitudes. In the five-layer experiment it has size $5\times 9$ and sparsity pattern
\begin{align}
R(\boldsymbol{\zeta})
=
\begin{pmatrix}
R_{00} & 0 & R_{02} & 0 & R_{04} & 0 & R_{06} & 0 & R_{08}\\
0 & R_{11} & 0 & R_{13} & 0 & R_{15} & 0 & R_{17} & 0\\
0 & 0 & R_{22} & 0 & R_{24} & 0 & R_{26} & 0 & R_{28}\\
0 & 0 & 0 & R_{33} & 0 & R_{35} & 0 & R_{37} & 0\\
0 & 0 & 0 & 0 & R_{44} & 0 & R_{46} & 0 & R_{48}
\end{pmatrix}.
\end{align}
The entries $R_{ij}$ are obtained from the Goupillaud amplitudes computed by the forward recurrence.

Since $R$ is rectangular, we use the Tikhonov-regularized right inverse
\begin{align}
    R^\dagger_\alpha
    =
    R^\top(RR^\top+\alpha I)^{-1},
    \qquad
    \alpha=10^{-8}.
\end{align}
The ROM objective used in the computations is
\begin{align}
    \mathscr O^{\rm ROM}(\boldsymbol{\zeta})
    =
    \left\|
    R(\boldsymbol{\zeta})R(\boldsymbol{\zeta}_{\rm ref})^\dagger_\alpha
    -
    I_5
    \right\|_F^2.
\end{align}
Here $\boldsymbol{\zeta}_{\rm ref}$ denotes the reference impedance for the corresponding experiment. In the clean and layer-perturbation experiments it is the true medium used to generate the data. In data-perturbation experiments, it is the effective impedance fitted from the perturbed data.

The direct data misfit is
\begin{align}
    \mathscr O^{\rm FWI}(\boldsymbol{\zeta})
    =
    \tau
    \left\|
    D(\boldsymbol{\zeta})-D^{\rm obs}
    \right\|_2^2.
\end{align}

\subsection{Optimization procedure}

All minimizations are performed with the L-BFGS-B algorithm under the box constraints
\begin{align}
    \theta_j\in[10^{-6},50],
    \qquad j=1,\ldots,5.
\end{align}
For the clean inversion, we use $20$ random initializations of the form
\begin{align}
    \theta^{(0)}
    =
    \theta^\star\odot(1+0.3\xi),
    \qquad
    \xi\sim\mathcal N(0,I_5),
\end{align}
with clipping to the admissible interval.

For each perturbation level $\sigma$, the reported statistics are computed over $50$ independent Monte Carlo trials. The random seed for the perturbation in trial $k$ is $k$, and the random seed for the initialization is $1000+k$.

\subsection{Perturbation models}

We use the following perturbation models.

\paragraph*{Layer perturbation.}

The physical medium itself is perturbed. We set
\begin{align}
    \zeta_j^{\rm pert}
    =
    \zeta_j^\star(1+\sigma\xi_j),
    \qquad
    j=1,\ldots,5,
\end{align}
where $\xi_j\sim\mathcal N(0,1)$ are independent. The exterior impedance $\zeta_0=1$ is kept fixed. Negative or zero values are clipped to $10^{-6}$. The observed data are
\begin{align}
    \mathbb{D}^{\rm obs}=\mathbb{D}(\boldsymbol{\zeta}^{\rm pert}).
\end{align}
The reconstruction error is measured with respect to $\boldsymbol{\zeta}^{\rm pert}$.

\paragraph*{Multiplicative data noise.}

The physical medium is not changed. The observed data are
\begin{align}
    \mathbb{D}_j^{\rm obs}
    =
    \mathbb{D}_j(\boldsymbol{\zeta}^{\star})(1+\sigma\xi_j),
    \qquad
    \xi_j\sim\mathcal N(0,1).
\end{align}
Since the perturbed data do not necessarily correspond exactly to an admissible layered medium, we first compute an effective reference impedance $\boldsymbol{\zeta}_{\rm ref}$ by minimizing the direct data misfit against $D^{\rm obs}$. The ROM objective is then formed using $R(\boldsymbol{\zeta}_{\rm ref})$, and the reconstruction errors are measured relative to $\boldsymbol{\zeta}_{\rm ref}$.

\paragraph*{Propagation-speed uncertainty.}

Propagation-speed uncertainty is modeled as a multiplicative perturbation of the nonzero even-time data entries, with larger perturbations at later times:
\begin{align}
    \mathbb{D}_j^{\rm obs}
    =
    \mathbb{D}_j(\boldsymbol{\zeta}^{\star})
    \left(1+\sigma\left(1+\frac{j}{8}\right)\xi_j\right),
    \qquad
    j=0,2,4,6,8.
\end{align}
The odd entries are kept equal to zero. The reconstruction error is measured with respect to $\boldsymbol{\zeta}^{\star}$.

\paragraph*{Receiver-position error: time-shift model.}

A receiver-position error is first modeled as a coherent time-shift distortion. A single Gaussian variable $\xi$ is drawn and we set
\begin{align}
    \mathbb{D}_j^{\rm obs}
    =
    \mathbb{D}_j(\boldsymbol{\zeta}^{\star})(1+\sigma\xi)^j,
    \qquad
    j=0,2,4,6,8.
\end{align}
The odd entries are kept equal to zero.

\paragraph*{Receiver-position error: amplitude model.}

We also model receiver-position errors as amplitude perturbations of the recorded arrivals:
\begin{align}
    \mathbb{D}_j^{\rm obs}
    =
    \mathbb{D}_j(\boldsymbol{\zeta}^{\star})(1+\sigma w_j\xi_j),
    \qquad
    j=0,2,4,6,8,
\end{align}
with weights
\begin{align}
    w_0=0,\qquad w_2=1,\qquad w_4=2,\qquad w_6=3,\qquad w_8=4.
\end{align}
Thus the first datum is kept fixed, while later arrivals are perturbed more strongly. The odd entries remain zero.

\paragraph*{Constant gain bias.}

A calibration error is modeled by multiplying the full data vector by a single random gain:
\begin{align}
    \mathbb{D}^{\rm obs}
    =
    (1+\sigma\xi)\mathbb{D}(\boldsymbol{\zeta}^{\star}),
    \qquad
    \xi\sim\mathcal N(0,1).
\end{align}

\paragraph*{Additive bias.}

A baseline error is modeled by adding a constant offset to all entries of the data vector:
\begin{align}
    \mathbb{D}^{\rm obs}
    =
    \mathbb{D}(\boldsymbol{\zeta}^{\star}) + b\mathbf{1},
\end{align}
where
\begin{align}
    b
    =
    \sigma s\xi,
    \qquad
    s=
    \max\left\{
    \frac{\|\mathbb{D}(\boldsymbol{\zeta}^{\star})\|_2}{\sqrt{9}},
    1
    \right\},
    \qquad
    \xi\sim\mathcal N(0,1).
\end{align}

\paragraph*{Time-origin error.}

A time-origin error is modeled by shifting the data vector by an integer number of time steps. We draw
\begin{align}
    k
    =
    \mathrm{clip}
    \left(
    \mathrm{round}(\sigma\xi),
    -2,
    2
    \right),
    \qquad
    \xi\sim\mathcal N(0,1),
\end{align}
and shift $D(\boldsymbol{\zeta}^{\star})$ by $k$ entries, using zero padding at the boundaries. Here $\mathrm{clip}(y,-2,2)$ denotes the projection of $y$ onto the interval $[-2,2]$.

\subsection{Perturbation grids}

The perturbation amplitudes used in the numerical experiments are as follows:
\begin{align}
    \sigma_{\rm layer}
    &\in
    \{0.5,0.6,0.7,0.8,0.9,1.0\},
    \\
    \sigma_{\rm data}
    &\in
    \{10^{-2},2\cdot10^{-2},\ldots,10^{-1}\},
    \\
    \sigma_{\rm speed}
    &\in
    \{10^{-2},2\cdot10^{-2},\ldots,10^{-1}\},
    \\
    \sigma_{\rm rec,time}
    &\in
    \{10^{-2},2\cdot10^{-2},\ldots,10^{-1}\},
    \\
    \sigma_{\rm rec,amp}
    &\in
    \{10^{-2},2\cdot10^{-2},\ldots,10^{-1}\},
    \\
    \sigma_{\rm gain}
    &\in
    \{10^{-2},2\cdot10^{-2},\ldots,10^{-1},2\cdot10^{-1},\ldots,6\cdot10^{-1}\},
    \\
    \sigma_{\rm add}
    &\in
    \{10^{-3},2\cdot10^{-3},\ldots,10^{-2}\},
    \\
    \sigma_{\rm origin}
    &\in
    \{0.4,0.5,0.6,0.7,0.8,0.9,1.0\}.
\end{align}

\subsection{Reported statistics}

For each perturbation level, we compute the reconstruction errors
\begin{align}
    \mathrm{err}_{\rm FWI}
    =
    \|\widehat{\boldsymbol{\zeta}}_{\rm FWI}
    -
    \boldsymbol{\zeta}_{\rm comp}\|_2,
    \qquad
    \mathrm{err}_{\rm ROM}
    =
    \|\widehat{\boldsymbol{\zeta}}_{\rm ROM}
    -
    \boldsymbol{\zeta}_{\rm comp}\|_2.
\end{align}
Here $\boldsymbol{\zeta}_{\rm comp}$ is the comparison impedance: it is the perturbed medium for the layer-perturbation experiment, the fitted effective reference medium for the data-noise experiment, and the original reference medium $\boldsymbol{\zeta}^{\star}$ for the remaining perturbation models.

The figures report the median of these errors over the Monte Carlo trials, together with the win fraction
\begin{align}
    \mathrm{Win}
    =
    \mathbb P
    \left(
    \mathrm{err}_{\rm ROM}
    <
    \mathrm{err}_{\rm FWI}
    \right),
\end{align}
estimated empirically over the $50$ trials.

\bibliographystyle{abbrv}
\bibliography{references}{}

\begin{thebibliography}{10}

\bibitem{belishev2007recent}
M.~I. Belishev.
\newblock Recent progress in the boundary control method.
\newblock {\em Inverse problems}, 23(5):R1--R67, 2007.

\bibitem{benner2015survey}
P.~Benner, S.~Gugercin, and K.~Willcox.
\newblock A survey of projection-based model reduction methods for parametric
  dynamical systems.
\newblock {\em SIAM review}, 57(4):483--531, 2015.

\bibitem{blondel1997handbook}
P.~Blondel and B.~J. Murton.
\newblock {\em Handbook of seafloor sonar imagery}, volume~7.
\newblock Wiley Chichester, 1997.

\bibitem{borcea2018untangling}
L.~Borcea, V.~Druskin, A.~V. Mamonov, and M.~Zaslavsky.
\newblock Untangling the nonlinearity in inverse scattering with data-driven
  reduced order models.
\newblock {\em Inverse Problems}, 34(6):065008, 2018.

\bibitem{borcea2020reduced}
L.~Borcea, V.~Druskin, A.~V. Mamonov, M.~Zaslavsky, and J.~Zimmerling.
\newblock Reduced order model approach to inverse scattering.
\newblock {\em SIAM Journal on Imaging Sciences}, 13(2):685--723, 2020.

\bibitem{borcea2021reduced}
L.~Borcea, V.~Druskin, and J.~Zimmerling.
\newblock A reduced order model approach to inverse scattering in lossy layered
  media.
\newblock {\em Journal of Scientific Computing}, 89(1):1, 2021.

\bibitem{borcea2022reduced}
L.~Borcea, J.~Garnier, A.~V. Mamonov, and J.~Zimmerling.
\newblock Reduced order model approach for imaging with waves.
\newblock {\em Inverse Problems}, 38(2):025004, 2022.

\bibitem{borcea2023waveform}
L.~Borcea, J.~Garnier, A.~V. Mamonov, and J.~Zimmerling.
\newblock Waveform inversion via reduced order modeling.
\newblock {\em Geophysics}, 88(2):R175--R191, 2023.

\bibitem{borcea20232waveform}
L.~Borcea, J.~Garnier, A.~V. Mamonov, and J.~Zimmerling.
\newblock Waveform inversion with a data driven estimate of the internal wave.
\newblock {\em SIAM Journal on Imaging Sciences}, 16(1):280--312, 2023.

\bibitem{borcea2024data}
L.~Borcea, J.~Garnier, A.~V. Mamonov, and J.~Zimmerling.
\newblock When data driven reduced order modeling meets full waveform
  inversion.
\newblock {\em SIAM Review}, 66(3):501--532, 2024.

\bibitem{BORCEA2024113272}
L.~Borcea, Y.~Liu, and J.~Zimmerling.
\newblock Electromagnetic inverse wave scattering in anisotropic media via
  reduced order modeling.
\newblock {\em Journal of Computational Physics}, 515:113272, 2024.

\bibitem{brunton2022data}
S.~L. Brunton and J.~N. Kutz.
\newblock {\em Data-driven science and engineering: Machine learning, dynamical
  systems, and control}.
\newblock Cambridge University Press, 2022.

\bibitem{cheney2009fundamentals}
M.~Cheney and B.~Borden.
\newblock {\em Fundamentals of radar imaging}.
\newblock SIAM, 2009.

\bibitem{curlander1991synthetic}
J.~C. Curlander and R.~N. McDonough.
\newblock {\em Synthetic aperture radar}, volume~11.
\newblock Wiley, New York, 1991.

\bibitem{druskin2016direct}
V.~Druskin, A.~V. Mamonov, A.~E. Thaler, and M.~Zaslavsky.
\newblock Direct, nonlinear inversion algorithm for hyperbolic problems via
  projection-based model reduction.
\newblock {\em SIAM Journal on Imaging Sciences}, 9(2):684--747, 2016.

\bibitem{druskin2018nonlinear}
V.~Druskin, A.~V. Mamonov, and M.~Zaslavsky.
\newblock A nonlinear method for imaging with acoustic waves via reduced order
  model backprojection.
\newblock {\em SIAM Journal on Imaging Sciences}, 11(1):164--196, 2018.

\bibitem{engquist2013application}
B.~Engquist and B.~D. Froese.
\newblock Application of the wasserstein metric to seismic signals.
\newblock {\em arXiv preprint arXiv:1311.4581}, 2013.

\bibitem{engquist2022optimal}
B.~Engquist and Y.~Yang.
\newblock Optimal transport based seismic inversion: Beyond cycle skipping.
\newblock {\em Communications on Pure and Applied Mathematics},
  75(10):2201--2244, 2022.

\bibitem{fouque2007wave}
J.-P. Fouque, J.~Garnier, G.~Papanicolaou, and K.~Solna.
\newblock {\em Wave propagation and time reversal in randomly layered media},
  volume~56.
\newblock Springer Science \& Business Media, 2007.

\bibitem{gilman2017transionospheric}
M.~Gilman, E.~Smith, and S.~Tsynkov.
\newblock {\em Transionospheric synthetic aperture imaging}.
\newblock Springer, 2017.

\bibitem{hesthaven2022reduced}
J.~S. Hesthaven, C.~Pagliantini, and G.~Rozza.
\newblock Reduced basis methods for time-dependent problems.
\newblock {\em Acta Numerica}, 31:265--345, 2022.

\bibitem{huang2018source}
G.~Huang, R.~Nammour, and W.~W. Symes.
\newblock Source-independent extended waveform inversion based on space-time
  source extension: Frequency-domain implementation.
\newblock {\em Geophysics}, 83(5):R449--R461, 2018.

\bibitem{monk2003finite}
P.~Monk.
\newblock {\em Finite element methods for Maxwell's equations}.
\newblock Oxford university press, 2003.

\bibitem{stefanov2005stable}
P.~Stefanov and G.~Uhlmann.
\newblock Stable determination of generic simple metrics from the hyperbolic
  dirichlet-to-neumann map.
\newblock {\em International Mathematics Research Notices},
  2005(17):1047--1061, 2005.

\bibitem{symes2008migration}
W.~W. Symes.
\newblock Migration velocity analysis and waveform inversion.
\newblock {\em Geophysical prospecting}, 56(6):765--790, 2008.

\bibitem{symes2022error}
W.~W. Symes.
\newblock Error bounds for extended source inversion applied to an acoustic
  transmission inverse problem.
\newblock {\em Inverse Problems}, 38(11):115002, 2022.

\bibitem{szabo2013diagnostic}
T.~L. Szabo.
\newblock {\em Diagnostic ultrasound imaging: inside out}.
\newblock Academic press, 2013.

\bibitem{tether1970construction}
A.~Tether.
\newblock Construction of minimal linear state-variable models from finite
  input-output data.
\newblock {\em IEEE Transactions on Automatic Control}, 15(4):427--436, 1970.

\bibitem{van2013mitigating}
T.~Van~Leeuwen and F.~J. Herrmann.
\newblock Mitigating local minima in full-waveform inversion by expanding the
  search space.
\newblock {\em Geophysical Journal International}, 195(1):661--667, 2013.

\bibitem{virieux2010overview}
J.~Virieux and S.~Operto.
\newblock An overview of full-waveform inversion in exploration geophysics.
\newblock {\em Geophysics}, 2010.

\bibitem{warner2016adaptive}
M.~Warner and L.~Guasch.
\newblock Adaptive waveform inversion: Theory.
\newblock {\em Geophysics}, 81(6):R429--R445, 2016.

\bibitem{yang2018application}
Y.~Yang, B.~Engquist, J.~Sun, and B.~F. Hamfeldt.
\newblock Application of optimal transport and the quadratic wasserstein metric
  to full-waveform inversion.
\newblock {\em Geophysics}, 83(1):R43--R62, 2018.

\bibitem{yu2001global}
O.~Yu.~Imanuvilov and M.~Yamamoto.
\newblock Global uniqueness and stability in determining coefficients of wave
  equations.
\newblock {\em Communications in Partial Differential Equations},
  26(7-8):1409--1425, 2001.

\end{thebibliography}
\end{document}